\documentclass[11pt]{article}

\usepackage{latexsym,color,amsmath,amsthm,amssymb,amscd,amsfonts,mathtools}
\usepackage{graphicx}
\usepackage{enumitem}
\usepackage{cancel} 
\usepackage{chngcntr}
\usepackage{apptools}
\usepackage{titlesec}

\titleformat{\paragraph}
  {\normalfont\normalsize\bfseries}
  {\theparagraph}
  {1em}
  {}

\titlespacing*{\paragraph}
  {0pt}{1.5ex plus .2ex}{0.8ex}

\newtheorem{conj}{Conjecture}[section]
\newtheorem{theorem}[conj]{Theorem}
\newtheorem{lemma}[conj]{Lemma}

\newtheorem{coro}[conj]{Corollary}
\newtheorem{remark}[conj]{Remark}

\newcommand\independent{\protect\mathpalette{\protect\independent}{\perp}} 
\def\independent#1#2{\mathrel{\rlap{$#1#2$}\mkern2mu{#1#2}}}

\newcommand{\X}{\mathcal{X}}

\newcommand{\eps}{\varepsilon}

\date{\vspace{-5ex}}

\author{Heshan Aravinda}

\title{On Two Conjectures Related to the Boros-Moll Sequences}

\begin{document}
\maketitle

\begin{abstract}\noindent
The Boros-Moll sequences $\{d_i(m)\}_{0\leq i \leq m}$ are defined as $$d_i(m) = 2^{-2m}\sum_{k=i}^m 2^k \binom{2m-2k}{m-k}\,\binom{m+k}{k}\,\binom{k}{i}.$$ Consider the ratio sequence $$u_i(m) = \dfrac{d_{i-1}(m)\, d_{i+1}(m)}{d_i^2(m)} .$$
In \cite{CG}, Chen and Gu conjectured that $\{u_i(m)\}_{2\leq i \leq m-2}$ is both reverse ultra log-concave and log-concave. In this paper, we prove the reverse ultra log-concavity conjecture using bounds of Chen–Gu and Zhao, and prove the log-concavity conjecture asymptotically by showing that $\{u_i(m)\}_{2\leq i \leq m-2}$ is strictly log-concave for all sufficiently large $m$. The key ingredient in the latter result is a recurrence of Kauers and Paule, which we interpret as a nonlinear discrete dynamical system through a backward map. We construct an approximation to the ratio sequence using its stable limiting fixed point and combine localization and contraction arguments with finite-difference estimates and separate interior and edge analyses to obtain the desired strict log-concavity.   

\end{abstract}

\vskip5mm
\noindent
{\bf Keywords:} Boros-Moll sequences, log-concavity, reverse ultra log-concavity
%log-concave, total variation distance, Wasserstein distance, $f$-divergence.

\section{Introduction}

The Boros-Moll sequences were introduced by Boros and Moll (see \cite{BM,BMI,BMII,M}) in their study of the quartic integral
$$\int_0^\infty \dfrac{1}{(t^4+ 2xt^2+1)^{m+1}}\,dt\,,\,\,\,\,\,\,x>-1 \,\,\,\text{and $m \in \mathbb{N}$}.$$ They showed that
$$\int_0^\infty \dfrac{1}{(t^4+ 2xt^2+1)^{m+1}}\,dt = \dfrac{\pi}{2^{m+ 3/2}\,(x+1)^{m+1/2}}\,P_m(x)\,,$$
where $$P_m(x) = 2^{-2m} \sum_{k=0}^m 2^k \binom{2m-2k}{m-k}\,\binom{m+k}{k}\,(x+1)^k.$$
The polynomials $P_m(x)$, called the {\it Boros-Moll polynomials}, belong to a special class of Jacobi polynomials. Writing 
$$P_m(x) = \sum_{i=0}^m d_i(m)x^i\,,$$
the sequence $\left\{d_i(m)\right\}_{0\leq i\leq m}$ of the coefficients is called the {\it Boros-Moll sequence}, and they have the following explicit form
$$d_i(m) = 2^{-2m}\sum_{k=i}^m 2^k \binom{2m-2k}{m-k}\,\binom{m+k}{k}\,\binom{k}{i}.$$
These sequences have been studied extensively because of their strong shape properties. For example, Boros and Moll proved that $\left\{d_i(m)\right\}_{0\leq i\leq m}$ is unimodal (see \cite{BMIII}) and conjectured that it is log-concave \cite{M}. This conjecture was subsequently proved by Kauers and Paule  \cite{KP} using a computer algebra approach. In \cite{CX_ratio}, Chen and Xia established strict ratio monotonocity. Higher-order refinements have also been obtained, including \(2\)- and \(3\)-log-concavity \cite{CDY} and higher-order Turán inequalities \cite{G,Z}. \vskip2mm \noindent
Recall that a sequence of real numbers $\{a_k\}_{0\leq k\leq n}$ is said to be {\it log-concave} if $a_k^2 \geq a_{k-1}\,a_{k+1}$ for $1 \leq k \leq n-1$. A stronger property is ultra log concavity; the sequence $\{a_k\}_{0 \leq k \leq n}$ is {\it ultra log-concave} if the normalized sequence $\left\{a_k/\binom{n}{k}\right\}_{0\leq k\leq n}$ is log-concave (see \cite{L}). Equivalently, 
\begin{equation}\label{defn_ULC}
k(n-k)a_k^2 - (n-k+1)(k+1)a_{k-1}\,a_{k+1} \geq 0.
\end{equation} 
A sequence is said to be {\it reverse ultra log-concave} if it satisfies the reverse inequality of \eqref{defn_ULC}, i.e.
\begin{equation}\label{defn_reverse_ULC}
k(n-k)a_k^2 - (n-k+1)(k+1)a_{k-1}\,a_{k+1} \leq 0.
\end{equation}
A classic example of a sequence that is both log-concave and reverse ultra log-concave is the sequence of coefficients of Bessel polynomials (see \cite{HS}).\vskip2mm 
The motivation for our work stems from the work of Chen and Gu \cite{CG} who proved that the Boros–Moll sequences are reverse ultra log-concave. They further showed that, asymptotically, these sequences lie on the borderline between ultra-log-concavity and reverse ultra log-concavity. Motivated by their analysis, they proposed the following two conjectures (see \cite[Conjecture 4.3, Conjecture 4.4]{CG})) on the sequence $\{u_i(m)\}$, where
$$u_i(m) =\dfrac{d_{i-1}(m)\, d_{i+1}(m)}{d_i^2(m)}.$$
\begin{conj}\label{reverse_ULC_conj}
   For $m \geq 2$, the sequence $\{u_i(m)\}_{2 \leq i \leq m-2}$ is reverse ultra log-concave.
\end{conj}
\begin{conj}\label{LC_BM_conj}
    For $m \geq 2$, the sequence $\{u_i(m)\}_{2 \leq i \leq m-2}$ is log-concave.
\end{conj} 
Our first main result settles Conjecture \ref{reverse_ULC_conj}.
\begin{theorem}\label{reverse_ULC}
    For $m \geq 2$, the sequence $\{u_i(m)\}_{2 \leq i \leq m-2}$ is reverse ultra log-concave.
\end{theorem}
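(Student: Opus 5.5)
We reduce the statement to an inequality between explicit rational functions of $i$ and $m$, using two-sided bounds on $u_i(m)$ that are already known. First we rewrite the claim in the paper's normalization. Put $a_k=u_{k+2}(m)$ for $0\le k\le n:=m-4$. Inequality \eqref{defn_reverse_ULC} at $k=i-2$ then reads
\begin{equation*}
\frac{u_i^2(m)}{u_{i-1}(m)\,u_{i+1}(m)} \;\le\; \frac{(i-1)(m-i-1)}{(i-2)(m-i-2)}, \qquad 3\le i\le m-3 .
\end{equation*}
For $m\le 6$ there is no interior index, or the range is tiny, so these cases are checked directly. We may therefore assume $m\ge 7$.

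The plan is to sandwich $u_i(m)$ between explicit rational functions $L_i(m)\le u_i(m)\le U_i(m)$ and then prove the purely rational inequality
\begin{equation*}
U_i(m)^2\,(i-2)(m-i-2)\;\le\; L_{i-1}(m)\,L_{i+1}(m)\,(i-1)(m-i-1).
\end{equation*}
This is sufficient because the left side of the target inequality is at most $U_i^2/(L_{i-1}L_{i+1})$.

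\emph{Lower bound.} The Chen--Gu reverse ultra log-concavity of $\{d_i(m)\}$ gives $i(m-i)d_i^2\le (i+1)(m-i+1)d_{i-1}d_{i+1}$. Hence we may take
\begin{equation*}
L_i(m)=\frac{i(m-i)}{(i+1)(m-i+1)} .
\end{equation*}
Chen and Gu also have a sharper refined lower bound with a correction term of order $1/m$ or $1/i^2$, and we would use that version if the simple one proves too weak.

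\emph{Upper bound.} For $U_i$ we use Zhao's upper bound on $d_{i-1}d_{i+1}/d_i^2$, which is of the same shape as $L_i$ plus a small correction, coming from the higher-order Tur\'an analysis. The point is that $U_i/L_i=1+O\bigl(1/(i^2)+1/(m-i)^2\bigr)$.

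\emph{Why there is room.} Consider the model inequality with $U_i$ replaced by $L_i$. The quantity $L_i^2/(L_{i-1}L_{i+1})$ factors as a product of terms like $\frac{i^2}{(i-1)(i+1)}\cdot\frac{(i+1)^2}{i(i+2)}$ together with the mirror terms in $m-i$. This product is $1+\frac{1}{i^2}+\dots$ from the $i$-side, whereas the right side $\frac{(i-1)(m-i-1)}{(i-2)(m-i-2)}$ is $1+\frac{1}{i}+\dots$. So there is slack of order $1/i$ (and $1/(m-i)$), while the correction in $U_i/L_i$ is only of second order. After clearing denominators, the needed inequality becomes nonnegativity of a polynomial $P(i,m)$. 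Substituting $i=3+s$, $m=i+3+t$ with $s,t\ge 0$, we expect $P$ to have all nonnegative coefficients, or to become so after a small grouping, and that is how positivity would be verified.

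\emph{Main obstacle: the edges.} The expected difficulty is at $i=3$ and $i=m-3$, and more generally for small $i$ or small $m-i$. There the slack of order $1/(i-2)$ competes with Zhao's correction, which is not uniformly small for small $i$, and the known bounds may only be valid for $i$ in some range like $2\le i\le m-2$ with $m$ large. To handle this:
\begin{enumerate}[label=(\roman*)]
\item First try to make the polynomial certificate work uniformly.
\item If it fails near the edges, use the closed forms of $d_m(m)$, $d_{m-1}(m)$, $d_{m-2}(m)$, $d_{m-3}(m)$ and $d_0,\dots,d_3$. These are explicit hypergeometric sums that simplify to products. Use them to compute $u_3$ and $u_{m-3}$ exactly as rational functions of $m$ (or with sharp bounds), and prove the edge inequalities separately.
\item Finally, verify the remaining finitely many small values of $m$ by direct computation.
\end{enumerate}
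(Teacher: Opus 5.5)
Your plan is the paper's proof. The paper bounds $u_{i\pm1}$ below by the Chen--Gu quantity $f_{i\pm1}=L_{i\pm1}$ and $u_i$ above by Zhao's $g_i=f_i\bigl(1+\frac{1}{m+i^2}\bigr)$, both valid for $m\ge2$ and $1\le i\le m-1$. It clears denominators and substitutes $i=x+3$, $m-i=y+3$, and every coefficient of the resulting polynomial is nonnegative, so the inequality holds uniformly for all $3\le i\le m-3$; this includes $m=6$ and the edges, where $\frac{i-1}{i-2}$ or $\frac{m-i-1}{m-i-2}$ equals $2$, so your fallbacks (ii)--(iii) are never needed.

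One small slip in your heuristic: the $i$-part of $L_i^2/(L_{i-1}L_{i+1})$ is $\frac{i^3(i+2)}{(i-1)(i+1)^3}=1+\frac{2i+1}{(i-1)(i+1)^3}=1+O(i^{-3})$, because you inverted the second factor. This only enlarges the slack you describe.
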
 \noindent
The proof of Theorem \ref{reverse_ULC} follows directly from sharp bounds for $u_i(m)$ obtained by Chen–Gu and Zhao; see Section 2 for relevant details.
\vskip2mm
Our second main result establishes Conjecture \ref{LC_BM_conj} for all sufficiently large $m$. 
\begin{theorem} \label{eventual_LC}
    There exists $M \in \mathbb{N}$ such that for all $m \geq M$, the sequence $\{u_i(m)\}_{2 \leq i \leq m-2}$ is strictly log-concave.
\end{theorem}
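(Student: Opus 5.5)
We want to show that $u_{i-1}(m)\,u_{i+1}(m) < u_i(m)^2$ for $3\le i\le m-3$ once $m$ is large. Our starting point is the Kauers--Paule recurrence for the Boros--Moll numbers. It expresses $d_{i-1}(m)$ as a combination of $d_i(m)$ and $d_{i+1}(m)$ with coefficients that are explicit rational functions of $i$ and $m$. Dividing through by $d_i(m)$ turns it into a relation among the ratios $r_i=d_{i-1}(m)/d_i(m)$:
$$r_i=F_{i,m}\bigl(1/r_{i+1}\bigr),$$
a first-order nonlinear recursion run backward from the top index $i=m$, where $d_m(m)$ and $d_{m-1}(m)$ are known in closed form. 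In the scaling $t=i/m$ the coefficients converge, so we are looking at a slowly varying (adiabatic) discrete dynamical system. Freezing $t$ gives a limiting map $F_t$. We would show that $F_t$ has a unique attracting fixed point $\rho(t)$ with $|F_t'(\rho(t))|\le\lambda<1$, uniformly for $t$ in compact subsets of $(0,1]$.

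Step 1 is a localization and contraction argument. By induction downward in $i$, and starting from the exact edge values, we show that $r_i$ stays in a fixed neighborhood of $\rho(i/m)$ on which $F_{i,m}$ is a contraction. The error $e_i=r_i-\rho(i/m)$ then obeys
$$|e_i|\le\lambda|e_{i+1}|+O\bigl(|\rho(t)-\rho(t+1/m)|+\|F_{i,m}-F_t\|\bigr)=\lambda|e_{i+1}|+O(1/m).$$
Hence $e_i=O(1/m)$ away from the edges. To obtain the second-order finite-difference information that log-concavity needs, we refine the approximation to $r_i\approx\rho(t)+\rho_1(t)/m+\rho_2(t)/m^2$, determining $\rho_1$ and $\rho_2$ by matching the recurrence order by order. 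Running the same contraction argument on the remainder gives an error of $O(m^{-3})$. Crucially, the contraction also controls differences: applying it to $e_i-e_{i+1}$ and to the second difference yields bounds of $O(m^{-4})$ and $O(m^{-5})$, because each difference operator costs a factor $1/m$ in a smooth adiabatic system.

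Step 2 is the interior estimate. Since $u_i=r_{i+1}/r_i$, the quantity
$$\log u_{i-1}+\log u_{i+1}-2\log u_i$$
is a third finite difference of $\log r_i$. With the expansion from Step 1 this equals
$$m^{-3}\,\frac{d^3}{dt^3}\log\rho(t)+O(m^{-4}).$$
Alternatively, the relevant leading term may be a $1/i^2$-type contribution coming from the known asymptotic $u_i\approx 1-\tfrac{1}{i}+\cdots$; this would be visible through the Chen--Gu/Zhao bounds, since Chen--Gu showed that the sequence is asymptotically on the ULC borderline. Either way, we identify the leading coefficient explicitly and verify that it is strictly negative on $\delta\le t\le 1-\delta$. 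This gives strict log-concavity for $\delta m\le i\le(1-\delta)m$.

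Step 3 handles the edges. For small $i$ (and, symmetrically, $i$ near $m$), the quantity $t=i/m$ is not bounded away from the degenerate point, so we switch scales. For $i$ bounded or $i=o(m)$ we use direct asymptotics of $d_i(m)$ in $m$ at fixed $i$, which follow from the explicit sum. Near $i=m$ we use the exact top-end values together with the backward recursion started at $i=m$, where the behaviour is essentially a perturbation of the explicit $d_m$, $d_{m-1}$, $d_{m-2}$. In each regime we again compute the sign of the leading term of the third difference, and we patch the regimes together using overlapping ranges.

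I expect the main obstacle to be the uniformity at the junction between the interior and edge regimes. There the contraction constant degenerates and the leading negative term shrinks, so the $O(m^{-4})$ error must be shown to be relatively small even as $t\to0$. I would first try weighted error bounds of the form $|e_i|\le C/(mt)^k$, propagated through the contraction, to bridge the two regimes.
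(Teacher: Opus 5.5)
\textbf{Verdict: the proposal has a genuine gap.} The left edge for $i\to\infty$, $i=o(m)$, is not handled; the growing right edge is also missing, and Step 2 has a sign error.

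\paragraph*{Where you match the paper.} Your framework is the paper's: the Kauers--Paule recurrence read as a backward map, an attracting fixed point, contraction, an $h$-expansion of the ratios with difference control of the error, and the third difference of $\log r_i$. In the interior your variant also works. You use a second-order expansion with one level of difference control, via $|\Delta^3 e_i|\le 4\max|\Delta e|=O(m^{-4})$. This does the same job as the paper's first-order expansion with $\Delta^k E_i=O(h^{k+2})$ for $k\le 3$.

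\paragraph*{The left-edge gap.} You name this regime as the obstacle but do not resolve it.
\begin{itemize}
\item Fixed-$i$ asymptotics of $d_i(m)$ are not uniform in $i$. They cannot give the third difference of $\log r_i$ to the required precision $o(i^{-3})$.
\item Weighted bounds on the interior expansion are at best a sketch of the paper's outer argument. Near $t=0$ the contraction factor is only $1-2t+o(t)$. The corrections are governed by $h/\theta^2=m/i^2$; in the paper's normalization $r_i=d_i/d_{i-1}$, one has $|hR_1(\theta)|\sim m/(2i^2)$. This is not small for $i\lesssim\sqrt m$.
\end{itemize}
The paper therefore splits at $i\asymp\sqrt m$.

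For $i/\sqrt m\to\infty$, the paper does the following.
\begin{itemize}
\item It keeps $\hat r_i=R_0(\theta_i)+hR_1(\theta_i)$ and proves $\theta$-weighted residual bounds $\Delta^k\tau_i=O(h^{k+2}\theta_i^{-k-3})$.
\item It takes the localization $(r_i-R_0(\theta_i))/R_0(\theta_i)=O(1/i)=o(\theta_i)$ from the telescoped Chen--Gu/Zhao bounds. This is what keeps $|\partial_x\Phi_{\theta_i,h}|\le 1-c\theta_i$ on the segment between $r_{i+1}$ and $\hat r_{i+1}$, and it is exactly where $i\gg\sqrt m$ is used.
\item It iterates down from the interior index $J=\lfloor\vartheta m\rfloor$, losing one factor $\theta_i^{-1}$. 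The relative error then has third difference $O(h^5\theta_i^{-6})=o(i^{-3})$, because $m/i^3\to0$.
\end{itemize}

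For $i\le C\sqrt m$, the paper drops contraction altogether. Set $\delta=m^{-1/2}$, $t_i=i\delta$, $s_i=(i/m)r_i$ and $\mu_i=s_i-1$; the bound $\mu_i=O(\delta)$ again comes from Chen--Gu/Zhao. The exact identity $\mu_i-\mu_{i+1}=\mu_i\mu_{i+1}-t_i^2\delta^2+t_i\delta^3-\mu_i\delta^2$ shows that each difference gains a factor $\delta$. Hence $\Delta^3\log s_i=O(m^{-2})=o(i^{-3})$.

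Your plan contains none of the three ingredients: the $\sqrt m$ split, a source for the localization that the degenerate contraction needs, or the inner-regime argument.

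\paragraph*{The right edge.} The growing right edge is not covered either, though it is easier. Perturbing the explicit $d_m,d_{m-1},d_{m-2}$ handles only bounded $j=m-i$. In your normalization $\rho(t)=t/(1-t)$ is not even finite at $t=1$, so the interior argument does not extend as stated. For $j\to\infty$, $j=o(m)$, the main term is $2/j^3$. You need something like the paper's $q_j=\frac{m}{j+1}r_{m-j}$ (with the paper's $r$), whose recurrence map has derivative $O(j/m)$. This gives $\Delta^k q_j=O(m^{-k})$ and $\Delta^3\log q_j=O(m^{-3})=o(j^{-3})$.

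\paragraph*{Step 2 sign error.} Step 2 has to be corrected, not hedged. With your $r_i=d_{i-1}/d_i$ one has $u_i=r_i/r_{i+1}$, not $r_{i+1}/r_i$, and $\rho(t)=t/(1-t)$. So your displayed leading term $m^{-3}(\log\rho)'''(t)=m^{-3}\bigl(2t^{-3}+2(1-t)^{-3}\bigr)$ is positive. With the correct identity the quantity is $-\Delta^3\log r_{i-1}=-m^{-3}\bigl(2t^{-3}+2(1-t)^{-3}\bigr)+O(m^{-4})<0$, as required.

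The ``$1/i^2$-type'' alternative is not a separate term. The second difference of $\log\frac{i}{i+1}$ is $-2/i^3+O(i^{-4})$, which is the $t^{-3}$ part of the same coefficient. In any case, in the interior the Chen--Gu/Zhao bounds have relative gap $1/(m+i^2)=O(m^{-2})$, too coarse to detect an $m^{-3}$ term.
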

The proof of Theorem \ref{eventual_LC} requires a more detailed asymptotic analysis. We introduce the coefficient ratios
$$ r_i(m)=\frac{d_i(m)}{d_{i-1}(m)} $$
and utilize its nonlinear backward recurrence as a discrete dynamical system. In the regime where $\theta=i/m$ stays bounded away from 0 and 1, which we call the {\it interior regime}, the dynamics possesses a stable limiting fixed point function $$R_0(\theta) = \dfrac{1- \theta}{\theta}$$ 
which we use to construct an approximation to $r_i$. We combine this approximation with localization and contraction arguments to obtain sufficiently precise control of the error and its third finite difference. The interior estimates are, however, not uniform as $\theta$ approaches the endpoints, requiring separate analyses of the {\it left-edge regime} $\theta \to 0$ and the {\it right-edge regime} $\theta \to 1$. At the left edge, a further transition occurs at the scale $i \asymp \sqrt{m}$, determined by the parameter $h/\theta^2=m/i^2$, leading to separate {\it inner} and {\it outer left-edge} arguments.
\vskip2mm  
The remainder of the paper is organized as follows. In Section 2, we prove Theorem \ref{reverse_ULC}. Section 3 is devoted to the proof of Theorem \ref{eventual_LC}, treating separately the interior, left-edge, and right-edge regimes. The auxiliary localization and finite-difference estimates used in the proof are included in the Appendix.

\section{Reverse  ultra log-concavity}
In this section, we prove Theorem \ref{reverse_ULC} using the bounds of Chen–Gu and Zhao recalled below.
   \begin{proof}
Since the sequence $\{u_i(m)\}$ starts at $i=2$, by letting $a_k = u_{k+2}$ with $k = i-2$ and $n = m-4$ in \eqref{defn_reverse_ULC}, we wish to show
       \begin{equation}\label{revULC}
           u_i^2(m) \leq \dfrac{(m-i-1)(i-1)}{(m-i-2)(i-2)}\,u_{i-1}(m)\,u_{i+1}(m) \,\,\,\text{for $3 \leq i \leq m-3$.}
           \end{equation}
       To this end, we use the following bounds on $u_i(m)$ (see \cite[Theorem 1.1]{CG}, \cite[Theorem 3.1]{Z}). 
       \begin{equation} \label{CGZ_bound}
       f_i(m) < u_i(m) < g_i(m)\,,\,\,\,\,\,\,\, \text{for $m\geq 2$ and $1 \leq i \leq m-1$}\,,\end{equation}
       where $f_i(m) = \dfrac{(m-i)i}{(m - i+1)(i+1)}$  and $g_i(m) = \dfrac{(m-i)\,i\,(m+i^2 +1)}{(m-i+1) (i+1)(m+i^2)}.$\vskip2mm \noindent
   Therefore, to prove \eqref{revULC}, it is enough to prove
   \begin{equation} \label{equ_revulc}
     \dfrac{(m-i-1)(i-1)}{(m-i-2)(i-2)}\,f_{i-1}(m)\, f_{i+1}(m) - g_i^2(m) \geq 0
   \end{equation}
  After plugging in for $f_{i-1}, f_{i+1}$ and $g_i$, and clearing the denominator, \eqref{equ_revulc} becomes     
   \begin{equation} \label{reverse_ulc_ineq}
        ((m-i)^2 -1)^2(i^2-1)^2(m-i+1)(i+1)(m+i^2)^2 - i^3\,((m-i)^2 - 4)(i^2 - 4)(m-i)^3\,(m+i^2 +1)^2  \geq 0
   \end{equation}
   Let $i = x+3$ and $m -i = y+3 $, where $x,y \geq 0$. With $x$ and $y$, the left-hand side of \eqref{reverse_ulc_ineq} can be simplified further to obtain a two-variable polynomial of the form $$\sum_{k=0}^9 P_k(y)\,x^k\,,$$
where \vskip4mm \noindent
$P_0(y) = 121y^7+5431y^6+95110y^5+845914y^4+4186273y^3+11628271y^2+16923360y+10080000 $ \vskip2mm \noindent
$P_1(y) = 151y^7+8343y^6+169020y^5+1668090y^4+8938203y^3+26435919y^2+40500050y+25220640 $ \vskip2mm \noindent
$P_2(y) = 70y^7+5342y^6+131337y^5+1466043y^4+8570004y^3+27030312y^2+43516157y+28227679$ \vskip2mm \noindent
$P_3(y) = 14y^7+1840y^6+58643y^5+755355y^4+4846924y^3+16306730y^2+27521947y+18529731 $ \vskip2mm \noindent
$P_4(y) = y^7+367y^6+16631y^5+252121y^4+1783711y^3+6395193y^2+11284673y+7859479$ \vskip2mm \noindent
$P_5(y) = 41y^6+3102y^5+56565y^4+442861y^3+1689741y^2+3108971y+2233459 $ \vskip2mm \noindent
$P_6(y) = 2y^6+377y^5+8509y^4+74066y^3+300434y^2+575111y+425173) $ \vskip2mm \noindent
$P_7(y) = 28y^5+821y^4+8018y^3+34596y^2+68817y+52273$ \vskip2mm \noindent
$P_8(y) = y^5+45y^4+506y^3+2334y^2+4827y+3765 $ \vskip2mm \noindent
$P_9(y) = y^4+14y^3+70y^2+151y+121 $
\vskip3mm
We conclude since every coefficient of the polynomial in $x$ is a polynomial in $y$ with non-negative coefficients.
 \end{proof}
\newpage
\section{Eventual strict log-concavity}
We now turn to the proof of Theorem \ref{eventual_LC}. The key step is to reformulate the log-concavity of $\{u_i(m)\}$ in terms of the consecutive coefficient ratios.
Let $$r_i = r_i(m) \coloneq \dfrac{d_i(m)}{d_{i-1}(m)}.$$Then $$u_i = \dfrac{r_{i+1}}{r_i}.$$ 
Define
$$D_{m,i} \coloneq 2\log u_i - \log u_{i-1} - \log u_{i+1}.$$
Thus proving Theorem \ref{eventual_LC}  is equivalent to proving $D_{m,i}>0$. Moreover,
$$D_{m,i} = \log r_{i-1} - 3 \log r_i + 3 \log r_{i+1} - \log r_{i+2} = -\Delta^3 (\log r_{i-1}).$$
So $$D_{m,i}>0 \iff -\Delta^3 (\log r_{i-1})>0 .$$
We will establish this inequality asymptotically by analyzing the ratios $r_i$ through their nonlinear backward recurrence. Let us begin with the following recurrence relation for Boros-Moll sequence $\{d_i(m)\}_{0\leq i \leq m}$ (see \cite{KP}).
\begin{equation}\label{BM_recurrence}
    (m+2-i)(m+i-1)\,d_{i-2}(m) - (i-1)(2m+1)d_{i-1}(m) + i(i-1)\,d_i(m) = 0\,,
\end{equation}
which is valid for $m\geq 1$.\vskip2mm \noindent
For convenience, we shift the index $i \mapsto i+1$ and divide by $r_i$ \vskip2mm
\begin{equation}\label{shifted_recurrence}
   (m+1-i) (m+i) - i (2m+1)\,r_i + i(i+1)\,r_i\,r_{i+1} = 0.
\end{equation}
Solving for $r_i$
$$r_i = \dfrac{(m-i+1)(m+i)}{i\, (2m+1 -(i+1))r_{i+1}}.$$
Define the map \begin{equation}\label{map_Phi}
\Phi_{m,i}(x) = \dfrac{(m-i+1)(m+i)}{i (i+1)}\left(\dfrac{2m+1}{i+1} - x\right)^{-1},
\end{equation}
so that $r_i = \Phi_{m,i}(r_{i+1})$.\vskip2mm
Thus, \eqref{shifted_recurrence} can be viewed as a discrete dynamical system for the ratios $r_i$, where the map $\Phi_{m,i}$ acts as a backward map describing the evolution from $r_{i+1}$ to $r_i$. This viewpoint is key to our analysis below: identifying the stable limiting function of this dynamics provides the contraction mechanism that allows us to control the error between the exact ratios and their asymptotic approximation.\vskip2mm
\subsection{Interior}
We first treat the interior regime $i \asymp m$. \vskip2mm
Fix $0< \epsilon < 1/2$ and restrict to $\epsilon m \leq i \leq (1- \epsilon)m$.
Let $\theta_i = \dfrac{i}{m}$ and $h = \dfrac{1}{m}$
so that $\theta_i \in [\epsilon , 1-\epsilon]$ and $\theta_{i+1}  = \theta_i + h$.\vskip2mm \noindent
Rewriting \eqref{map_Phi} in terms of $\theta$ and $h$:
\begin{equation}\label{theta_h_map}
    \Phi_{\theta , h}(x) = \dfrac{(1-\theta+h)(\theta +1)}{\theta (\theta+h)}\left(\dfrac{2+h}{\theta+h} - x\right)^{-1}.
\end{equation}

\paragraph*{Stable fixed point}

Note that the introduction of the parameters $\theta_i$ and $h$ is natural for the asymptotic analysis in the interior regime: the variable $\theta_i$ tracks the asymptotic location of the index $i$ relative to $m$ while $h$ is the small parameter governing the limit $m \to \infty$. In particular, $\theta_{i+1}  = \theta_i + h$ so consecutive indices correspond to $ O(h)$ change in the variable $\theta$.\vskip2mm \noindent
As observed, the variable $\theta$ changes only by $O(h)$ at each step in the recurrence, forcing the map $ \Phi_{\theta , h}$ to vary slowly in the interior. Therefore it is natural to study the limiting map 
\begin{equation}\label{lim_map}
     \Phi_{\theta,0}(x) = \dfrac{1-\theta^2}{\theta^2} \left(\dfrac{2}{\theta} - x\right)^{-1},
\end{equation}
 existence of its fixed point and their stability.
In order to investigate the fixed point, say $R(\theta)$, consider the equation
\begin{equation}\label{fixed_eqn}
    R(\theta) = \Phi_{\theta,0}(R(\theta)),
\end{equation}
which after simplifying, is equivalent to
$$R(\theta)^2 \theta^2 - 2 \theta \,R(\theta)+ 1- \theta^2 = 0.$$
The corresponding roots are
$$R_-(\theta) = \dfrac{1 - \theta}{\theta}\,\,\,\,\text{and}\,\,\,\,R_+(\theta) = \dfrac{1 + \theta}{\theta}.$$
However, only $R_-(\theta)$ represents the stable fixed point since 
$$\left. \frac{\partial \Phi_{\theta,0}}{\partial x} \right |_{R_1(\theta)} = \dfrac{1- \theta}{1+ \theta}<1 \,\,\,\,\text{and}\,\,\,\,\left. \frac{\partial \Phi_{\theta,0}}{\partial x} \right |_{R_2(\theta)} = \dfrac{1+ \theta}{1- \theta}>1.$$
Let $R_0(\theta) = \dfrac{1-\theta}{\theta}$. The stability of $R_0(\theta)$ suggests that the ratio $r_i$ should track this limiting function as $m \to \infty$. However, since $\theta_i$ varies by $O(h)$ at each step, $R_0(\theta_i)$ alone does not exactly follow the recurrence. We therefore construct a first-order approximation of the form
\begin{equation*}
    \hat{r}_i = R_0(\theta_i) + hR_1(\theta_i),
\end{equation*}
where $R_1(\theta_i)$ is chosen so that its residual is of order $h^2$. The contractivity near the stable fixed point will then allow us to convert this residual estimate into a quantitative control of the error $r_i- \hat{r}_i$. We carry out the details in the following subsections.

\paragraph*{First-order approximation}
We seek a first-order approximation of the form
\begin{equation} \label{approx}
    \hat{r}_i = R_0(\theta_i) + hR_1(\theta_i),
\end{equation}
where $R_1(\theta_i)$ is chosen to be smooth and its residual is of order $h^2$.
    Since $\theta_{i+1} = \theta_i + h$,
    \begin{equation*}
        \hat{r}_{i+1} = R_0(\theta_i + h) + hR_1(\theta_i +h) = R_0(\theta_i) + h(R_0^\prime(\theta_i)+ R_1(\theta_i)) + O_\epsilon(h^2).
    \end{equation*}
    Here, the second equality is due to Taylor expansions of $R_0$ and $R_1$ about $\theta$.\vskip2mm \noindent
    For fixed $\theta$, the map $(h, x) \mapsto \Phi_{\theta, h}(x)$ is smooth in $h$ and $x$. As $h \to 0$, $(h, \hat{r}_{i+1} ) \to (0, R_0(\theta_i) )$, so expanding $\Phi_{\theta, h}$ about $(0, R_0(\theta))$ gives
    $$ \Phi_{\theta, h}(R_0+ h(R_0^\prime + R_1) + O(h^2)) =\Phi_{\theta, 0}(R_0) + h \partial_h\Phi_{\theta, 0}(R_0) + h (R_0^\prime + R_1) \partial_x \Phi_{\theta, 0}(R_0) + O\epsilon(h^2).$$
    Since  $R_0(\theta) = \Phi_{\theta,0}(R_0(\theta))$, choose $R_1$ so that the coefficients of $h$ in $\Phi_{\theta, h}(\hat{r}_{i+1})$ and $\hat{r}_i$ agree.\vskip2mm \noindent
    This yields
    \begin{equation}\label{R_1_eqn}
        R_1 = \partial_h\Phi_{\theta, 0}(R_0)+ (R_0^\prime + R_1) \partial_x \Phi_{\theta, 0}(R_0) \implies R_1 = \dfrac{\partial_h \Phi_{\theta, 0}(R_0) + R_0^\prime \partial_x \Phi_{\theta, 0}(R_0)}{1 - \partial_x\Phi_{\theta, 0}(R_0)}  \end{equation}
    It remains to compute the derivatives. Note that $R_0^\prime(\theta) = - \dfrac{1}{\theta^2}$ and direct computation gives
    $$\partial_h \Phi_{\theta, 0}(R_0) = \dfrac{3\theta^2 -2 \theta+1}{\theta^2 (1+ \theta)}\,\,\,\,\,\text{and}\,\,\,\,\partial_x \Phi_{\theta, 0}(R_0) = \dfrac{1-\theta}{1+\theta}$$
    Plugging these into \eqref{R_1_eqn} results in
    $$R_1(\theta) = \dfrac{3 \theta - 1}{2\theta^2}.$$
    
\paragraph*{Calculation of residual}
    Define the one-step residual
    \begin{equation} \label{residual}
        \tau_i \coloneq \Phi_{\theta_i, h}(\hat{r}_{i+1}) - \hat{r}_i
    \end{equation}
    Then $\tau_i = \left.\tau(\theta, h) \right|_{\theta = \theta_i}$ where $\tau(\theta, h)$ is the smooth residual defined as
    \begin{equation} \label{smooth_residual}
        \tau(\theta,h) = \Phi_{\theta, h}(R_0(\theta + h) + hR_1(\theta +h)) - R_0(\theta)-hR_1(\theta).
    \end{equation}
    Clearly $ \tau(\theta,0) = 0$. Moreover, the derivative of $\tau$ w.r.t $h$ evaluated at $h=0$ is
    $$\partial_h \tau(\theta,0) = \partial_h \Phi_{\theta, 0}(R_0) +  (R_0^\prime + R_1) \partial_x \Phi_{\theta, 0}(R_0) -R_1 = 0.$$
    Here the second equality follows from \eqref{R_1_eqn}, the condition needed for $R_1$.\vskip2mm \noindent
    In what follows, we work on the slightly larger interval $I_\epsilon = [\epsilon/2 , 1- \epsilon/2]$ to accommodate finite shifts in $\theta$ arising from the recurrence and subsequent difference estimates so that for sufficiently small $h$, all shifted arguments needed for indices with $\theta_i \in [\epsilon , 1- \epsilon]$  remain in $I_\epsilon$.\vskip2mm \noindent
    Notice that denominators arise from \eqref{smooth_residual} remain uniformly away from 0 for sufficiently small $h$. Indeed, the three denominators show up in the expression are
    $$\theta, \theta+h,\,\,\, \text{and}\,\,\,\, \dfrac{2+h}{\theta+h} -R_0(\theta + h) - hR_1(\theta +h). $$
    Clearly, $\theta , \theta+h >0$ and it is not too difficult to verify $\dfrac{2+h}{\theta+h} -R_0(\theta + h) - hR_1(\theta +h) \geq c_{\epsilon}>0$ for all $\theta \in I_\epsilon$ and sufficiently small $h$.\vskip2mm \noindent
   As a consequence, the function $\tau$ is smooth on $I_\epsilon \times \{0\}$. In particular, all of its derivatives exist, are continuous, and therefore are bounded on the compact strip.\vskip2mm \noindent
   Fix $\theta$ and let $f(h) = \tau(\theta, h)$ which is a smooth function in $h$. Applying Taylor's theorem to $f(h)$
   \begin{align*}
       f(h) &= f(0)+ hf^\prime(0) + \int_0^h (h-s)f''(s)ds\\
       & = \int_0^h (h-s)f''(s)ds \,\,\,\,\,\,\,\,\,\,\,\,\text{(because $f(0) = f^\prime(0) = 0)$}
   \end{align*}
  Let us introduce the parameter $t$ such that $s = th$ where $0\leq t \leq 1$. Rewriting the above equation in terms of $t$
   \begin{align*}
       f(h) = h^2\, \int_0^1 (1-t)f''(th)\,dt.
          \end{align*}
Equivalently,
$$\tau(\theta , h) = h^2\, \int_0^1 (1-t)\partial_h^2 \tau(\theta, th)\,dt\,,$$
where $\partial_h^2 \tau$ denotes the second partial derivative of $\tau$ w.r.t $h$.\vskip2mm \noindent
Define
$$S(\theta, h) = \int_0^1 (1-t)\partial_h^2 \tau(\theta, th)\,dt$$
Then $\tau(\theta , h)  = h^2 S(\theta,h)$. Since $\tau$ is smooth, for fixed $k$, we have
\begin{align*} \partial_\theta^k S &= \int_0^1 (1-t)\partial_\theta^k \partial_h^2 \tau(\theta, th)\,dt\\
\implies
    |\partial_\theta^k S| & \leq |\partial_\theta ^k \partial_h^2 \tau(\theta, th)| \int_0^1 (1-t)\,dt\\
    & = \sup |\partial_\theta^k \partial_h^2\, \tau| \int_0^1 (1-t)\,dt\\
    & = \dfrac{1}{2} \sup |\partial_\theta^k \partial_h^2\, \tau|
\end{align*}
Therefore, $\displaystyle \sup_{\theta \in I_\epsilon} |\partial_\theta^k S| \leq C_{\epsilon, k}$ uniformly for sufficiently small $h$. This also implies that $S$ is smooth on the compact strip under consideration. \vskip2mm \noindent
Let $\Delta_h S(\theta,h) = S(\theta+h, h) - S(\theta, h)$. By the Fundamental Theorem of Calculus, we can write
$$\Delta_h S(\theta,h) = S(\theta+h, h) - S(\theta, h) = \int_0^h \partial_\theta S (\theta+t,h)\,dt.$$
Then the repeated application of FTC gives
\begin{align*}\Delta_h^k S(\theta,h) &= \int_{[0,h]^k} \partial_\theta^k S (\theta+t_1+t_2+\dots+t_k,h)\,dt_1\,dt_2\,\dots dt_k.
\end{align*}
This implies
\begin{align*}|\Delta_h^k S(\theta,h)| & \leq  |\partial_\theta^k S| \int_{[o,h]^k}1\, dt_1\,dt_2\dots dt_k\\
& \leq  h^k C_{\epsilon, k}
\end{align*}
Since $\tau_i = \tau(\theta_i,h) = h^2 S(\theta_i, h)$ and $\theta_{i+1} = \theta_i+h$, $\Delta \tau_i = h^2 \Delta_h S(\theta_i,h)$. Therefore,
$$|\Delta^k \tau_i| \leq C_{\eps,k}h^{2+k}.$$
Equivalently, 
\begin{equation}\label{difference_tau}
|\Delta^k \tau_i| = O_\epsilon(h^{2+k}).
\end{equation}
Thus, the residual is not merely $O(h^2)$; it varies smoothly with parameter $\theta$, with each successive finite difference gaining an additional factor of $h$. Therefore, this quantifies how closely the first-order approximation constructed earlier follows the exact recurrence relation.\vskip2mm
In the next subsection, we will establish that this small varying residual, together with contraction, implies that $\hat{r}_i$ is close to $r_i$. A key ingredient in the argument is uniform localization. (see Corollary \ref{Localization}(i))
\paragraph*{Localization and error estimate}
    Set $$E_i\coloneq  r_i - \hat{r}_i$$
    We know $r_i = \Phi_{\theta_i,h}(r_{i+1})$ and $\hat{r}_i = \Phi_{\theta_i, h}(\hat{r}_{i+1}) -\tau_i$. Then
    $E_i = \Phi_{\theta_i,h}(r_{i+1}) - \Phi_{\theta_i, h}(\hat{r}_{i+1}) + \tau_i $.
    \begin{equation}\label{absolute}
    \implies |E_i| \leq | \Phi_{\theta_i,h}(r_{i+1}) - \Phi_{\theta_i, h}(\hat{r}_{i+1})| + C_{\epsilon,k}h^2.
    \end{equation}
    By the uniform localization established in Appendix (see Corollary \ref{Localization}(i)), $r_i = R_0(\theta_i)+ O_\epsilon(h)$ uniformly on the interior. Since $R_1$ is uniformly bounded there, $\hat{r}_i = R_0(\theta_i)+ O_\epsilon(h)$ as well. Hence, for sufficiently small $h$, both $r_{i+1}$ and $\hat{r}_{i+1}$, and therefore, the line segment joining $r_{i+1}$ and $\hat{r}_{i+1}$ lies in a sufficiently small uniform neighborhood of $R_0(\theta)$. Hence, by the Mean Value Theorem, there exists $\zeta_i$ between $r_{i+1}$ and $\hat{r}_{i+1}$ such that
    $$| \Phi_{\theta_i,h}(r_{i+1}) - \Phi_{\theta_i, h}(\hat{r}_{i+1})|  = |\partial_x \Phi_{\theta_i, h}(\zeta_i)||r_{i+1} - \hat{r}_{i+1}|.$$
    Earlier, we showed that $\partial_x \Phi_{\theta,0}(R_0(\theta)) = \dfrac{1-\theta}{1+\theta}<1$. Since $\theta \in I_\epsilon$, set  $$q_\epsilon = \sup_{\theta \in I_\epsilon} |\partial_x \Phi_{\theta,0}(R_0(\theta))|\,$$ which is less than 1. Then by continuity and compactness, there exist a constant $q_\epsilon < \rho_\epsilon <1$ and a sufficiently small uniform neighborhood of the stable limit fixed-point function $R_0(\theta)$ such that, for all sufficiently small $h$,
$$ |\partial_x \Phi_{\theta,h}(x)|<\rho_\eps.$$
       Since $\zeta_i$ lies in the same uniform neighborhood by localization, 
    $$| \Phi_{\theta,h}(r_{i+1}) - \Phi_{\theta_i, h}(\hat{r}_{i+1})| \leq \rho_\epsilon|r_{i+1} - \hat{r}_{i+1}|.$$
    Combining this with \eqref{difference_tau}, we get
    $$|E_i| \leq \rho_\epsilon \,|E_{i+1}| + C_{\epsilon}h^2. $$
    Let us now choose a terminal point $J$ such that
    $$J = \lfloor (1- \epsilon/2)m \rfloor.$$
    If $\eps m \leq i \leq (1-\epsilon)m$, then $\theta_J \in I_\epsilon$ for all $i \leq j \leq J$. Write $L = J-i$, and by iteration
    \begin{align*}
        |E_i| &\leq \rho_\epsilon \,|E_{i+1}| + C_{\epsilon}h^2 \\
          &\leq \rho_\epsilon^2 |E_{i+2}| + C_\epsilon h^2 (1+ \rho_\epsilon)\\
          &\leq \dots\\
          & \leq \rho_\epsilon^L |E_J| + C_\epsilon h^2 \sum_{n=0}^{L-1} \rho_\epsilon^n
    \end{align*}
    Note that the localization holds uniformly for every index encountered during this iteration, hence the same contraction factor $\rho_\epsilon<1$ applies at every step.\vskip2mm
    It follows from the localization, together with the boundedness of $\hat{r}_j$ on the compact interval that $E_J = O_\epsilon(1)$. In particular, $E_j = O_\epsilon(1)$ for each $j$ in the enlarged interior interval. Moreover, since $i\leq (1-\epsilon)m$ and $\lfloor x\rfloor \geq x-1$ $$L = J-i \geq \lfloor(1-\epsilon/2)m\rfloor - (1-\epsilon)m \geq (1-\epsilon/2)m -1 - (1-\epsilon)m = \dfrac{\epsilon}{2}m - 1$$
    which is at least $k_\epsilon m$ for all sufficiently large $m$. Therefore,
    $$\rho_\epsilon^L = e^{-a_\epsilon/h}\,,$$
    where $a_\epsilon \leq  - k_\epsilon \log\rho_\epsilon $.\vskip2mm
    This implies $\rho_\epsilon^L$ decays exponentially as $h \to 0$, and therefore, $\rho_\epsilon^L = o(h^2)$ (in fact, it is $o(h^N)$ for $N>0$).\vskip2mm \noindent
    Revisiting the iteration for $|E_i|$,
    \begin{equation}\label{E_i}
    |E_i| = O_\epsilon(h^2)\,\,\,\,\,\text{ uniformly for $\epsilon m \leq i \leq (1- \epsilon)m$. }
    \end{equation}
   Thus the first-order approximation introduced above is valid uniformly in the interior:
    \begin{equation}\label{first_order}
    r_i = R_0(\theta_i) + hR_1(\theta_i) + O_\epsilon(h^2).
    \end{equation}

\paragraph*{Finite-difference control of error}
While the estimate $E_i = O_\epsilon(h^2)$is useful, it is not sharp enough for the third-difference calculation. Indeed, $E_i = O_\epsilon(h^2)$ alone implies $\Delta^3E_i = O_\eps(h^2)$, however, it is expected that the $h$-scale variation of neighboring errors produce an additional factor of $h$ with each difference. We therefore refine the preceding argument to obtain
$$\Delta^kE_i = O(h^{k+2})\,\,\,\,\,\,\,\text{for $k=1,2,3.$}$$
Let us start by defining the smooth map $H(.)$
$$H(\theta, E) = \Phi_{\theta, h}(\hat{r}(\theta+h)+E) - \Phi_{\theta, h}(\hat{r}(\theta+h)).$$
Here, $\hat{r}(\theta) = R_0(\theta) + hR_1(\theta)$ and $\hat{r}(\theta_i) \coloneq \hat{r}_i$.\vskip2mm \noindent
Then the exact error can be rewritten as
$$E_i = H(\theta_i, E_{i+1}) + \tau_i.$$
Observe $H(\theta, 0) = 0$. Moreover, the chain rule gives
$$\partial_E H(\theta, E) = \partial_x \Phi_{\theta, h}(\hat{r}(\theta+h)+E).$$
By the localization, since $E_i = O_\epsilon(h^2)$, for sufficiently small $h$, the argument $\hat{r}(\theta+h)+E$ remains in the same uniform neighborhood of the stable limit $R_0(\theta)$. Therefore, we obtain a contraction property for $H$. More specifically
$$|\partial_x \Phi_{\theta, h}(\hat{r}(\theta+h)+E)| \leq \rho_\epsilon.$$
As a consequence, we have
\begin{equation}\label{smoothH_derivative}
|\partial_EH(\theta, E)| \leq \rho_\epsilon. \end{equation}
This, together with the Mean Value Theorem yields
$$|H(\theta, E)| \leq \rho_\epsilon |E|.$$ In addition $\partial_\theta H (\theta, E) = O_\epsilon(E)$. Indeed, by the Fundamental Theorem of Calculus
$$\partial_\theta H(\theta, E) - \partial_\theta H(\theta,0) = \int_0^{|E|} \partial_E\,\partial_\theta H(\theta, s)\,dt$$
\begin{equation}\label{boundedness}
\implies H_\theta(\theta, E) \leq C_\epsilon |E|.
\end{equation}
In the second step, we use the fact that $\partial_\theta H(\theta,0) = 0$ (because $H(\theta,0) = 0$ for every $\theta$) and $H$ is smooth on the uniform contraction neighborhood.\vskip2mm \noindent
Let us now consider $\Delta E_i$. By definition
\begin{align*}
    \Delta E_i & = E_{i+1} -  E_i\\
                   &  = H(\theta_{i+1}, E_{i+2}) + \tau_{i+1} -H(\theta_i, E_{i+1}) - \tau_i\\
                   & = (H(\theta_{i+1}, E_{i+2})- H(\theta_i, E_{i+2})) + (H(\theta_i, E_{i+2})- H(\theta_i, E_{i+1}) + \Delta\tau_i
                \end{align*}
 In the third equation, we add and subtract the term $H(\theta_i, E_{i+2})$ so that we can utilize the estimates for $\partial_\theta H(\theta, E)$ and $\partial_E H(\theta, E)$ obtained above.\vskip2mm \noindent
 The Mean Value Theorem in $\theta$, combined with \eqref{E_i} \& \eqref{boundedness} , results in an $O_\epsilon (h^3)$ bound for the first bracket. Similarly, for the second bracket, by the MVT (in $E$), combined with \eqref{smoothH_derivative}, gives the bound $\rho_\epsilon |\Delta E_{i+1}|$. Finally, for $\Delta \tau_i$ term, use \eqref{difference_tau}. Therefore
\begin{equation}\label{E_i_recuurence}
    |\Delta E_i|  \leq \rho_\epsilon |\Delta E_{i+1}| + C_\epsilon h^3.
\end{equation}
Equation \eqref{E_i_recuurence} has the same structure as the original error estimate calculated above. Therefore, using the same geometric contraction argument, we obtain
\begin{equation*}
    |\Delta E_i|= O_\epsilon (h^3). 
\end{equation*}
Differencing once more, the new non-contractive terms involve bounded derivatives $H_E$ and $H_\theta$ multiplied
by combinations of $|E_i| = O(h^2)$ , $|\Delta E_i|= O_\epsilon (h^3) $, the parameter increment $h$, and $\Delta^2\tau_i = O_\epsilon(h^4)$. Turns out every such term is $O_\epsilon (h^4)$ , and the highest-order error difference again appears with contraction
factor at most $\rho_\epsilon$. Hence
\begin{equation*}
    |\Delta^2 E_i|= O_\epsilon (h^4). 
\end{equation*}
The same argument gives
\begin{equation}\label{Delta_Ei}
    |\Delta^3 E_i|= O_\epsilon (h^5). 
\end{equation}

\paragraph*{Third difference and positivity}
From \eqref{first_order}
\begin{align*}
r_i &= R_0(\theta_i) + hR_1(\theta_i) + E_i\\
&= R_0(\theta_i) \left(1+ h\dfrac{R_1(\theta_i)}{R_0(\theta_i)} + \dfrac{E_i}{R_0(\theta_i}\right)
\end{align*}
Taking the logarithm on both sides 
\begin{equation} \label{logr_i}
\log r_i = \log R_0(\theta_i) + \log\left(1+ h\dfrac{R_1(\theta_i)}{R_0(\theta_i)} + \dfrac{E_i}{R_0(\theta_i)}\right).
\end{equation}
Define
$$L_0(\theta) = \log R_0(\theta) = \log \left(\dfrac{1-\theta}{\theta}\right)  \,\,\,\text{and}\,\,\, L_1(\theta) = \dfrac{R_1(\theta)}{R_0(\theta)} = \dfrac{3\theta-1}{2\theta (1- \theta)}$$
so that $\log r_i = L_0(\theta_i) + \log(1+ hL_1(\theta_i) + e_i)$, where $e_i = \dfrac{E_i}{R_0(\theta_i)}$.\vskip2mm \noindent
             Since $R_0>0$ on our compact interior interval and $E_i = O_\epsilon(h^2)$, we have $e_i = O_\epsilon(h^2)$.\vskip2mm \noindent
             Using $\log(1+x) = x - x^2/2 + O(x^3)$, write
             $$\log(1+ hL_1(\theta_i) + e_i) = hL_1(\theta_i)+ e_i -\dfrac{1}{2}h^2L_1^2(\theta_i) + O_\epsilon(h^3).$$
              Indeed, because $e_i^2 = O_\epsilon(h^4)$ and $hL_1(\theta_i)e_i = O_\epsilon(h^3)$.\vskip2mm \noindent
              Take $\eta_i = \log(1+ hL_1(\theta_i) + e_i) -hL_1(\theta_i)=  e_i -\dfrac{1}{2}h^2L_1^2(\theta_i) + O_\epsilon(h^3)$, so equation \eqref{logr_i} becomes
              \begin{equation}\label{logriII}            
              \log r_i = L_0(\theta_i) + hL_1(\theta_i) + \eta_i.
 \end{equation}
              And it is easy to see $\eta_i = O_\epsilon(h^2)$. Then the smoothness of $R_0, R_1$ and $\log(1+x)$ near $x=0$, together with $\Delta^kE_i = O(h^{k+2})$ for $k=0,1,2,3$, gives
              \begin{equation}
                  \Delta^k \eta_i = O_\epsilon(h^{k+2})\,\,\,\,\,\,\text{for $k=0,1,2,3$.}
              \end{equation}
              In particular, $\Delta^3\eta_i = O(h^5).$\vskip2mm \noindent
              Let us now move to our main equation
              $$D_{m,i} = -\Delta^3 \log r_{i-1}.$$
              Substituting \eqref{logriII}:
              $$D_{m,i} = -\Delta_h^3 L_0(\theta_{i-1}) - h \Delta_h^3 L_1(\theta_{i-1}) - \Delta^3\eta_{i-1},$$
              where $\Delta_hF(\theta) = F(\theta+h)-F(\theta)$, the discrete difference with step size $h$.\vskip2mm
              Since $L_0, L_1 \in C^\infty(0,1)$, the Taylor's formula (see Lemma \ref{finite difference}) provides the following third-difference expansions
              $$-\Delta_h^3 L_0(\theta_{i-1})  = -h^3L_0'''(\theta_{i-1}) + O_\epsilon(h^4)\,\,\,\text{and}\,\,\,\Delta_h^3 L_1(\theta_{i-1}) = O_\epsilon(h^3)$$
             Thus
              \begin{equation}
                  D_{m,i} = -h^3L_0'''(\theta_{i-1})+ O_\epsilon(h^4).
              \end{equation}
              It remains to compute the leading term $L_0'''(\theta_{i-1})$. Since $\theta_{i-1} = \theta_i - h$, $L_0(\theta_{i-1}) = \log(1-\theta_i+h) - \log(\theta_i-h)$, so the direct computation gives
              $$-L_0'''(\theta_{i-1}) = 2\dfrac{3\theta_{i-1}^2-3\theta_{i-1}+1}{\theta_{i-1}^3(1-\theta_{i-1})^3}$$
              which is positive since $3\theta_{i-1}^2-3\theta_{i-1}+1 = 3(\theta_{i-1}-1/2)^2 +1/4>0$.\vskip2mm
               This proves that $D_{m,i}>0$ throughout the compact interior for all sufficiently large m.    
         \subsection{Left-edge regime}
We now consider what we call the \textit{left-edge regime}. This corresponds to the case $\dfrac{i}{m} \to 0$, or equivalently, $i = o(m)$. Note that the argument carried out in the interior regime cannot be extended uniformly to this case because the stable limiting function $R_0(\theta) = \dfrac{1-\theta}{\theta}$ blows up as $\theta \to 0$. More importantly, the contraction factor satisfies $\partial_x \Phi_{\theta,0}(R_0) \to 1$. Thus, the uniform contraction and $\theta$ expansions used above degenerate near the left edge, so a separate analysis is required. \vskip2mm
For convenience, we divide this regime into two cases: \textit{fixed left-edge} and \textit{growing left-edge}.
\subsubsection{Fixed left-edge indices}
    Suppose $i$ remains bounded as $m \to \infty$, i.e. $i = O(1)$. For fixed $i$, Chen-Gu/Zhang bounds \eqref{CGZ_bound} implies
    $$u_i = \dfrac{d_{i+1}\,d_{i-1}}{d_i^2} \to \dfrac{i}{i+1}$$
    Consequently,
    $$e^{D_{m,i}} = \dfrac{u_i^2}{u_{i-1}u_{i+1}} \to \dfrac{i^3(i+2)}{(i+1)^3 (i-1)}$$
    The limiting quantity is strictly greater than $1$ since 
    $$i^3(i+2) - (i+1)^3(i-1) = 2i+1>0.$$
    Hence for fixed $i=O(1)$, $D_{m,i}>0$ for all sufficiently large $m$.
\subsubsection{Growing left-edge}
Suppose $i \to \infty$ while $i = o(m)$. Recall from the interior analysis, the stable limiting function is $ R_0(\theta) = \dfrac{1-\theta}{\theta}=\dfrac{m-i}{i}$. As $\theta \to 0$, this suggests that the dominant growth of $r_i$ is $m/i$. Thus to remove this, introduce the normalized ratio
$$s_i \coloneq \dfrac{i}{m}\,r_i$$
so that $s_i \sim 1$ in the growing left-edge regime.\vskip2mm \noindent
Taking the logarithm of the normalized ratio $s_i$ gives
$$\log r_i = \log s_i  - \log i + \log m.$$
Substituting into $D_{m,i}$
\begin{equation}\label{D_growingleft}
D_{m,i} = \log \left(\dfrac{i^3(i+2)}{(i-1)(i+1)^3}\right) + \log \left(\dfrac{s_{i+1}^3 s_{i-1}}{s_i^3 s_{i+2}}\right).
\end{equation}
Set $\ell_i \coloneq  \log s_i$, so \eqref{D_growingleft} becomes
\begin{equation}
D_{m,i} = \log \left(\dfrac{i^3(i+2)}{(i-1)(i+1)^3}\right)  - \Delta^3(\ell_{i-1}).
\end{equation}
By Lemma \ref{logarithmic expansion}
$$\log \left(\dfrac{i^3(i+2)}{(i-1)(i+1)^3}\right) = \dfrac{2}{i^3}+O(i^4).$$ Thus to prove $D_{m,i}>0$, it is enough to show $\Delta^3(\ell_i)  = o(i^{-3})$.

\begin{remark}
The growing left-edge regime can be separated into two ranges. To see the cut off, recall that $h=1/m$ and $\theta=i/m$. As $\theta \to 0$, the interior expansion contains terms whose size is governed by the factor $\dfrac{h}{\theta^2} = \dfrac{m}{i^2}$. For example, as $\theta \to 0$, the first-order correction satisfies
$$|hR_1(\theta)| \sim \dfrac{h}{2\theta^2} = \dfrac{m}{2i^2}$$
Thus, the transition occurs when $i$ is of order $\sqrt{m}$. This motivates treating separately {\it inner left-edge regime} 
$$i = O(\sqrt{m}).$$ 
and {\it outer left-edge regime}
    $$i/\sqrt{m} \to \infty\,,\, i/m \to 0$$ 

In the inner regime, $m/i^2$ need not be small, so the expansion argument used in the interior cannot be applied directly; instead, we exploit the normalized ratios $s_i$ defined above. In the outer regime, $m/i^2 \to 0$ , allowing an interior-type asymptotic expansion to be recovered with suitably adapted estimates.
\end{remark}
\vskip4mm
\medskip \noindent
{\bf Case I: Inner left-edge regime, $i = O(\sqrt{m})$}\vskip2mm \noindent
Set $$\delta = m^{-1/2}\,\,\,\text{and}\,\,\,t_i = i\,m^{-1/2}$$
so that the regime $i = O(\sqrt{m})$ corresponds to $t_i = O(1)$ while $\delta \to 0$ as $m \to \infty$. More precisely, for any fixed $C>0$, we consider $i \leq C\sqrt{m}$ so that $t_i \leq C$.\vskip2mm \noindent
    As in the interior analysis, we make use of a localization. In particular, by the inner left-edge localization established in Appendix (see Corollary \ref{Localization}(ii))
        $$s_i = 1+ O_C(\delta)$$ uniformly for $i \leq C\sqrt{m}$. Thus all relevant normalized ratios lie in a fixed neighborhood of 1 for sufficiently small $\delta$. The localization alone, however, does not automatically give sufficient finite-difference control. So the next step is to utilize the Boros-Moll recurrence for $s_i$ to control its variation across neighboring indices.\vskip2mm \noindent
      Substitute $r_i = (m/i)s_i$ into \eqref{BM_recurrence}:
      \begin{equation}
          (m+1-i)(m+i) - (2m+1)ms_i + m^2 s_is_{i+1} = 0.
      \end{equation}
      Solving for $s_i$
      \begin{equation*}
          s_i = \dfrac{(m+1-i)(m+i)}{(2m+1) - m^2s_{i+1}} = \dfrac{m^2 - i^2 +m +i}{(2m+1) - m^2s_{i+1}}
      \end{equation*}
      After dividing both the denominator and numerator by $m^2$ and substituting $\delta$ and $t_i$, we get
      \begin{equation}\label{backward_Psi}
          s_i = \dfrac{1 + (1-t_i^2)\delta^2 + t_i\delta^3}{2+ \delta^2 - s_{i+1}}.
      \end{equation}
      Equivalently $s_i = \Psi_{\delta, t_i}(s_{i+1})$, where $\Psi_{ \delta, t}(x) =\dfrac{1 + (1-t^2)\delta^2 + t\delta^3}{2+ \delta^2 - x}$; the backward map for the normalized ratio $s_i$. \vskip2mm \noindent
      For $t_i \leq C$, the estimate $s_i = 1+ O_C(\delta)$ shows that all relevant $s_i$'s stay in a neighborhood of 1. Hence for sufficiently small $\delta$, the denominator $2+ \delta^2 - x$ is uniformly bounded away from $0$ for $t\leq c$ and $x$ in this neighborhood. Therefore, $\psi_{ \delta, t}$ is smooth on the relevant neighborhood and all of its derivatives required below are bounded uniformly. \vskip2mm \noindent
      Let $\mu_i \coloneq s_i - 1$. From \eqref{backward_Psi}
      \begin{equation}\label{mu_diff}
          \mu_i - \mu_{i+1} = \mu_i \mu_{i+1}- t_i^2 \delta^2 + t_i \delta^3-\mu_i\delta^2.
          \end{equation}
      By definition $\mu_i = O_C(\delta)$ and $t_i = O_C(1)$. Therefore, $\mu_i \mu_{i+1} = O_C(\delta^2) , t_i^2\delta^2 = O_C(\delta^2) , t_i\delta^3 = O_C(\delta^3)$ and $\mu_i\delta^2 = O_C(\delta^3)$. Substituting in \eqref{mu_diff}
      $$\Delta\mu_i = O_C(\delta^2).$$
      Taking a finite difference in \eqref{mu_diff} and using the fact that $t_{i+1}- t_i = \delta$, we find that each term on the RHS gains an additional factor $\delta$. More specifically,
      $\Delta \mu_i \mu_{i+1} = O_C(\delta^3), \Delta (t_i^2\delta^2) = O_C(\delta^3) , \Delta (t_i\delta^3) = \delta^3 \Delta t_i = O_C(\delta^4)\,\,\,\text{and}\,\,\,\Delta (\mu_i\delta^2) = \delta^2 \Delta \mu_i = O_C(\delta^4)$
      Therefore
      $$\Delta^2\mu_i = O_C(\delta^3).$$
      Similarly, taking another difference and using $\mu_i = O_C(\delta) , \Delta\mu_i = O_C(\delta^2)$ and $\Delta^2\mu_i = O_C(\delta^3)$ gives
       $$\Delta^3\mu_i = O_C(\delta^4).$$
              Finally, since $s_i=1+\mu_i$ and $\mu_i=O_C(\delta)$, the values $s_i$ remain uniformly bounded away from 0. Moreover, the function $x \mapsto \log(1+x)$ is smooth near $x=0$ with uniformly bounded derivatives on the relevant neighborhood. Combining that with discrete chain and product rules, together with $\Delta^k\mu_i = O_C(\delta^{k+1})$ for $k=0,1,2,3$ gives
       $$\Delta^k \ell_i= O_C(\delta^{k+1}).$$
       In particular, $\Delta^3 \ell_i= O_C(\delta^4) =O_C(m^{-2})$.\vskip2mm \noindent
       Now as $i \to \infty$ and $i \leq C\sqrt{m}$
       $$\dfrac{m^{-2}}{i^{-3}} = \dfrac{i^3}{m^2} \leq \dfrac{C^3}{\sqrt{m}} \to 0$$
       and hence $\Delta^3 \ell_i = o(i^{-3})$.
       \vskip5mm \noindent
{\bf Case II: Outer left-edge regime, $\dfrac{i}{\sqrt{m}} \to \infty$ and $\dfrac{i}{m} \to 0$} \vskip2mm \noindent
We now consider the left regime where $\dfrac{i}{\sqrt{m}} \to \infty$ and $\dfrac{i}{m} \to 0$. This is equivalent to,
$$\theta_i = \frac{i}{m} \to 0\,\,\text{and}\,\,\frac{h}{\theta_i^2} = \frac{m}{i^2} \to 0$$
Recall the functions $R_0(\theta) = \dfrac{1-\theta}{\theta}$ and $R_1(\theta) = \dfrac{3\theta-1}{2\theta^2} $ obtained in the interior analysis. With this, we reuse 
$$\hat{r}_i = R_0(\theta_i) + hR_1(\theta_i) $$ as a candidate approximation in the outer left-edge regime. The interior error estimates, however, are not uniform as $\theta \to 0$. Consequently, the validity of this approximation must be established again, with  dependence on $\theta$. This is where the condition $h/\theta^2 \to 0$ becomes useful as it will ensure that the correction and error terms obtained below are asymptotically small.
\vskip2mm \noindent
\paragraph*{Two-term approximation and residual estimates.}

Recall the one-step residual
$$ \tau_i \coloneq \Phi_{\theta_i, h}(\hat{r}_{i+1}) - \hat{r}_i.$$
The direct computation yields
\begin{equation}\label{explicit_tau}
    \tau(\theta,h)= \dfrac{-h^2 (3h\theta-h + 3\theta^2 + 4\theta-3)}{2\theta^2 (h^2+3h\theta+ 3h + 2\theta^2 + 2\theta)}.
\end{equation}
Since $h/\theta^2 \to 0$, $h = o(\theta^2)$. But $\theta^2 = o(\theta)$, and thus $h = o(\theta)$. Let $Q(\theta,h) \coloneq h^2+3h\theta+ 3h + 2\theta^2 + 2\theta$. 
$$\frac{Q(\theta,h)}{\theta} = \dfrac{h^2}{\theta} + 3h + 3\frac{h}{\theta} + 2\theta + 2.$$
Since $h^2/\theta = h\cdot h/\theta $ and both factors approach 0, we have $h^2/\theta \to 0$. Similarly, $h = \theta \cdot h/\theta \to 0$ as both $\theta, h/\theta \to 0$. Therefore,
$$\frac{Q}{\theta} = 2 + o(1).$$ which implies $Q \asymp \theta.$
As a result, the denominator of \eqref{explicit_tau} has size $O(\theta^3)$. \vskip2mm \noindent
For the numerator, notice that as $h, \theta \to 0$, $3h\theta-h + 3\theta^2 + 4\theta-3 \to -3$. Therefore, it is bounded. Consequently, the numerator has size $O(h^2)$. We get
$$\tau(\theta,h) = O(h^2 \theta^{-3}).$$
The expression \eqref{explicit_tau} also allows us to differentiate with respect to $\theta$. First, since $Q(\theta,h) = \theta q(\theta.h)$, where $q(\theta, h) = 2+ 2\theta+ 3h + \dfrac{h^2}{\theta}+ 3\dfrac{h}{\theta}$, the denominator of \eqref{explicit_tau} can be written as $2\theta^3\, q(\theta,h)$. Also, let $P(\theta,h) =3h\theta-h + 3\theta^2 + 4\theta-3$. Then
$$\tau(\theta, h ) = -\dfrac{h^2 P(\theta,h)}{2\theta^3 \,q(\theta,h)}.$$
Define $G(\theta,h) = -\dfrac{P(\theta,h)}{2q(\theta,h)}$. Then $\tau(\theta,h) = h^2\theta^{-3}G(\theta,h)$. With $h/\theta^2 \to 0$, all the terms occurring in $G$ are small. Also, $q(\theta,h)$ is bounded away from 0. More importantly, differentiating $G$ w.r.t $\theta$ costs at most one power of $\theta^{-1}$ each time. For example,
$$ \partial_\theta (h/\theta) =-h/\theta^2 \,\,\text{and}\,\,\partial_\theta^2 (h/\theta) =2h/\theta^3$$and similarly,
$$\partial_\theta^k (h^2/\theta) = O(h^2\theta^{-k-1})\,\,\,k=0,1,2,3.$$
Since $h/\theta^2$ is bounded (in fact it tends to zero), these terms satisfy bounds of the form
\begin{equation}\label{partial_G}
\partial_\theta^k G = O(\theta^{-k})\,\,\,k=0,1,2,3.
\end{equation}
Let us now differentiate $\tau = h^2\theta^{-3}G$ w.r.t $\theta$. By Leibniz rule
$$\partial_\theta^k \tau = h^2 \sum_{i=0}^k \binom{k}{i}\partial_\theta^i (\theta^{-3}) \partial_\theta^{k-i}G.$$
It follows from \eqref{partial_G} that $\partial_\theta^{k-i} G = O(\theta^{-(k-i)})$ and $\partial_\theta^i (\theta^{-3}) = O(\theta^{-3-i}) $. Thus every summand is $O(\theta^{-k-3})$. This implies, in the region $h/\theta^2 = o(1)$,
\begin{equation}\label{partial_tau}
\partial_\theta^k \tau(\theta,h) = O(h^2\theta^{-k-3}) \,,k=0,1,2,3.\end{equation}
Then by the Fundamental Theorem of Calculus
$$\Delta \tau_i = \tau(\theta_i+h,h) - \tau(\theta_i,h) = \int_0^h \partial_\theta \tau (\theta_i+t,h)\,dt.$$
Combining this with \eqref{partial_tau} gives 
$$\implies \Delta \tau_i  = O(h^3 \theta_i^{-4}).$$
And the repeated application of FTC allows us to conclude
\begin{equation} \label{tau_diff_theta}
     \Delta^k \tau_i  = O(h^{k+2} \theta_i^{-k-3})\,,k=0,1,2,3.
\end{equation}
This is the analogue of the interior residual estimate \eqref{difference_tau}, except that the $\theta$-dependence captures its explicit deterioration as $\theta \to 0$.\vskip2mm
\paragraph*{Localization, contraction, and error estimates.}
We will now estimate the error. Write
$$E_i = r_i - \hat{r}_i$$ In terms of the backward map $\Phi_{\theta_i, h}$ 
$$E_i = \Phi_{\theta_i,h}(r_{i+1}) - \Phi_{\theta_i, h}(\hat{r}_{i+1}) + \tau_i .$$
Recall that in the interior regime we used the uniform contraction factor $\rho_\epsilon<1$, however, such a uniform constant is unavailable here, since
$$\partial_x \Phi_{\theta,0}(R_0(\theta)) = \dfrac{1-\theta}{1+\theta} = 1-2\theta + O(\theta^2) \to 1.$$
Thus the contraction gap is only of order $\theta$, and we need an estimate that is correspondingly sharper, and as before we turn to localization. More specifically, by the outer left-edge localization established in Appendix (see Corollary \ref{Localization}(iii)),
$$\frac{r_i- R_0(\theta_i)}{R_0(\theta_i)} = O(i^{-1}) = o(\theta_i).$$
We next verify the same relative localization for the approximate ratio $\hat{r}_i$.
\vskip2mm \noindent
       Since $\hat{r}_i = R_0(\theta_i) + hR_1(\theta_i)$, we have
       $$\dfrac{\hat{r}_i-R_0(\theta_i)}{R_0(\theta_i)} = \dfrac{hR_1(\theta_i)}{R_0(\theta_i)} = \dfrac{3i-m}{2i(m-i)} = -\dfrac{1}{2i}+ \dfrac{1}{m-i}$$
       $$\implies \dfrac{\hat{r}_i}{R_0(\theta_i) } = 1 -\dfrac{1}{2i} + \dfrac{1}{m-i} = 1+O(i^{-1})\,\,\,\,\,\,(\text{because $i = o(m)$})$$
       Thus
       $$\dfrac{\hat{r}_i- R_0(\theta_i)}{R_0(\theta_i) } = o(\theta).$$
       Consequently, both $r_i$ and $\hat{r}_i$ lie in an $o(\theta_i)$- relative neighborhood of $R_0(\theta_i)$. Hence every point $x$ on the segment joining them has the form
       $$x = R_0(\theta_i)(1+ o(\theta_i)).$$
       We therefore proceed to evaluate the derivative of the backward map $\Phi_{\theta,h}$ uniformly on this segment.             
       \vskip2mm \noindent
       Going back to $\Phi_{\theta,h}$, let us write
       $$\partial_x
       \Phi_{\theta,h}(x) = A_{\theta,h} (B_{\theta,h} - x)^{-2}\,,$$
       where $A_{\theta,h} = \dfrac{(1-\theta+h)(\theta +1)}{\theta (\theta+h)}$ and $B_{\theta,h} = \dfrac{2+h}{\theta+h}$.\vskip2mm
       Let $x=R_0(\theta)(1+ o(\theta))$ so its relative perturbation is $o(\theta)$. We compare $x$ with $R_0(\theta)$. First, $R_0(\theta) \asymp \theta^{-1}$, and hence, $x-R_0(\theta) = R_0(\theta)\,o(\theta) = O(\theta^{-1})\,o(\theta) = o(1)$. On the other hand, $B_{\theta,h}- R_0(\theta) \asymp \theta^{-1}$. Therefore,
       $$\dfrac{x - R_0(\theta)}{B_{\theta,h}- R_0(\theta) } = o(\theta) \implies B_{\theta,h}-x = (B_{\theta,h}- R_0(\theta))  (1+ o(\theta)).$$
       Taking the inverse square gives
       $$(B_{\theta,h}-x)^{-2} = (B_{\theta,h}- R_o(\theta))^{-2}  (1+ o(\theta)).$$
       Hence
       \begin{equation}\label{partialx_Phi}
           \partial_x \Phi_{\theta,h}(x) = \partial_x \Phi_{\theta,h}(R_0(\theta))(1+ o(\theta)).
       \end{equation}
       Direct computation gives 
       \begin{align*}
           \partial_x \Phi_{\theta,h}(R_0(\theta)) &= \dfrac{(1-\theta+h)(1+\theta) \theta (\theta+h)}{[\theta+ \theta^2+ h(2\theta-1)]^2}\\
           & = \dfrac{(1-\theta+h)(1+\theta) (1+h/\theta)}{[1+ \theta+ h/\theta(2\theta-1)]^2}
           \end{align*}
           Since $\dfrac{h}{\theta^2} \to 0$, $\dfrac{h}{\theta} = o(\theta)$. Also $h = o(\theta)$. Consequently,
           \begin{equation}
               \partial_x \Phi_{\theta,h}(R_0(\theta)) = 1-2\theta + o(\theta).
           \end{equation}
           Using \eqref{partialx_Phi}
           $$\partial_x \Phi_{\theta,h}(x)  = 1-2\theta + o(\theta).$$           
This shows that perturbing $x$ away from $R(\theta)$ changes $\partial_x \Phi_{\theta,x}$ by $o(\theta)$. Therefore, uniformly for any $x$ between $r_{i+1}$ and $\hat{r}_{i+1}$
$$\partial \Phi_{\theta_i,h}(x) = 1- 2\theta_i+ o(\theta_i).$$
In particular, throughout a sufficiently small interval $(o, \vartheta_0)$, provided $m$ is sufficiently large
\begin{equation}
    |\partial \Phi_{\theta_i,h}(x) | \leq 1- c\theta,
\end{equation}
where $c>0$ is fixed.
\vskip2mm \noindent
Then the Mean Value Theorem, combining with \eqref{tau_diff_theta} yields
\begin{equation} \label{recurrenceE_i_outerleft}
    |E_i| \leq (1- c\theta_i)|E_{i+1}|+ Ch^2\theta_i^{-3}.
\end{equation}
Unlike in the interior regime, the contraction here is not uniform, since $1-c\theta_i\to1$ as $\theta_i\to0$. To control the iteration, we combine this recurrence with the previously established interior estimates.
\vskip2mm \noindent
Choose a fixed $\vartheta \in (0, \vartheta_0)$ such that $\theta = \vartheta$ lies inside a compact interval on which the interior analysis applies, and set $J = \lfloor \vartheta m \rfloor$
so that $\theta_J = \dfrac{J}{m} =  \vartheta + O(h)$ which lies in the earlier interior regime. In particular, $E_J = O_\vartheta(h^2)$.\vskip2mm \noindent
Thus, from $i$ through $J-1$, we use the recurrence \eqref{recurrenceE_i_outerleft}, while at the terminal index $J$, we use the already proved interior estimate $E_J = O_\vartheta(h^2)$.\vskip2mm \noindent
Iterating \eqref{recurrenceE_i_outerleft} from $i$ through $J-1$ results in
\begin{equation}
|E_i| \leq \prod_{j=i}^{J-1} (1- c\theta_j)|E_J| + Ch^2 \sum_{k=i}^{J-1}\theta_k^{-3} \prod_{j=i}^{k-1}(1- c\theta_j).
\end{equation}
Using $1-x \leq e^{-x}$,
$$\prod_{j=i}^{J-1} (1- c\theta_j) \leq \exp \left(-c\,\sum_{j=i}^{J-1}\theta_j\right).$$
But $\theta_j = \dfrac{j}{m}$, so $\displaystyle \sum_{j=i}^{J-1} \theta_j = \dfrac{1}{m} \sum_{j=i}^{J-1} j = \dfrac{(J-i)(J+i-1)}{2m}$. With $J \sim \vartheta m$ and $i = o(m)$, we have $\displaystyle \sum_{j=i}^{J-1} \theta_j = \dfrac{\vartheta^2}{2}m + o(m)$. Hence, for some $c_\vartheta>0$
$$\prod_{j=i}^{J-1} (1- c\theta_j) \leq e^{-c_\vartheta m}.$$ Together with $E_J = O_\vartheta(h^2)$, we get
$\prod_{j=i}^{J-1} (1- c\theta_j)|E_J| = O_\vartheta (h^2 e^{-c_\vartheta m})$.\vskip2mm \noindent
For the residual term \noindent
\begin{align*}
    \sum_{k=i}^{J-1}\theta_k^{-3} \prod_{j=i}^{k-1}(1- c\theta_j) \leq \theta_i^{-3} \sum_{j=0}^\infty (1-c\theta_i)^n = \theta_i^{-3} \dfrac{1}{c\theta_i}= \dfrac{\theta_i^{-4}}{c}.
\end{align*}
Combining both estimates, 
$$|E_i| \leq  O_\vartheta (h^2 e^{-c_\vartheta m}) + O(h^2 \theta^{-4})$$
and hence $$E_i = O(h^2 \theta^{-4}).$$ as the first term is negligible for sufficiently large $m$.\vskip2mm \noindent
This also confirms the validity of the error approximation
\begin{equation}\label{approx_outer_left}
    r_i = R_0(\theta_i) + hR_1(\theta_i) +O(h^2 \theta_i^{-4}). 
    \end{equation}
The higher-difference estimates $\Delta^k E_i$ are obtained exactly as in the interior regime. The only modification is that the uniform contraction factor $\rho_\epsilon<1$ is replaced by the variable factor $1-c\theta_i$. Applying one forward difference to the error recurrence, and then iterating inductively, gives
\begin{equation*}
    |\Delta^kE_i| \leq (1- c\theta_i)|\Delta^kE_{i+1}|+ Ch^{k+2}\theta_i^{-k-3}\,,\,\,0\leq k \leq 3.
\end{equation*}
After that. iterating through $J=\lfloor\vartheta m\rfloor$, exactly as above, introduces one additional factor $\theta_i^{-1}$. Thus
\begin{equation}
    \Delta^k E_i = O(h^{k+2}\theta_i^{-k-4})\,,\,\,\,k=0,1,2,3.
\end{equation}
\paragraph*{Third difference and positivity.}
It remains to pass this to $\log r_i$. To this end, we return to the error estimate and obtain
$$r_i = \hat{r}_i + E_i = \hat{r}_i\left(1+ u_i\right)\,,\,\,\,\,\text{where $u_i = \dfrac{E_i}{\hat{r}_i}$}$$
Using the two-term approximation, we also have
$$\hat{r}_i = R_0(\theta_i) \left(1+ h\dfrac{R_1(\theta_i)}{R_0(\theta_i)}\right).$$
It is easy to see that $h\dfrac{R_1(\theta_i)}{R_0(\theta_i)} =\dfrac{3i-m}{2i(m-i)} = -\dfrac{1}{2i}+ \dfrac{1}{m-i}.$\vskip2mm \noindent
After combining these, we get the exact factorization
\begin{equation}\label{factor_ri}
    r_i = R_0(\theta_i) \left(1 -\dfrac{1}{2i}+ \dfrac{1}{m-i}\right)(1+ u_i).
\end{equation}
Taking the logarithm on both sides gives
\begin{equation} \label{logri_factor}
    \log r_i = \log R_o(\theta_i) + \log \left(1 -\dfrac{1}{2i}+ \dfrac{1}{m-i}\right) + \log(1+u_i).
\end{equation}
This decomposition lets us finish the error analysis. In particular, applying $-\Delta^3$ to \eqref{logri_factor} , with the index shift $i \mapsto i-1$, and using Lemma \ref{third difference}, we obtain
\begin{align*}
    -\Delta^3 \log R_0(\theta_{i-1}) &= \dfrac{2}{i^3} + o(i^{-3}).\\
    -\Delta^3 \log \left(1 -\dfrac{1}{2(i-1)}+ \dfrac{1}{m-(i-1)}\right) &= o(i^{-3}).\\
   - \Delta^3 \log(1+ u_{i-1})&= o(i^{-3}).
    \end{align*}
\vskip2mm \noindent
Thus we conclude
$$D_{m,i} = \dfrac{2}{i^3}+ o(i^{-3})>0.$$
for sufficiently large $m$ in the outer-left regime.

\subsection{Right-edge regime}

Finally, we treat the \textit{right-edge regime}, which corresponds to the case $\dfrac{i}{m} \to 1$, or equivalently, if we write $j = m-i$, then $j = o(m)$. We first look at the fixed right-edge indices.
\subsubsection{Fixed right-edge indices: $j=O(1)$}
The argument is similar to the fixed left-edge case, i.e. use Chen-GU/Zhao bounds.\vskip2mm \noindent
Recall
\begin{equation*}
        f_i(m) < u_i(m) < g_i(m)\,,\end{equation*}
       where $f_i(m) = \dfrac{(m-i)i}{(m - i+1)(i+1)}$ and $g_i(m) = f_i(m)\left(1+ \dfrac{1}{m+i^2}\right)$. \vskip2mm
       Set $i = m-j$, then $f_{m-j}(m) = \dfrac{j}{j+1}\cdot \dfrac{m-j}{m-j+1}$ and $g_{m-j} = \dfrac{j}{j+1}\cdot \dfrac{m-j}{m-j+1}\cdot \left(1+ \dfrac{1}{m+ (m-j)^2}\right)$. Notice that, for fixed $j$, $f_{m-j} , g_{m-j} \to \dfrac{j}{j+1}$ as $m \to \infty$. Thus
       $$u_{m-j} \to \dfrac{j}{j+1}.$$
       Similarly, replacing $j$ with $j-1$ and $j+1$ results in
       $$u_{m-j+1} \to \dfrac{j-1}{j} \,\,\,\text{and}\,\,\,\, u_{m-j-1} \to \dfrac{j+1}{j+2}$$
       Therefore,
       $$e^{D_{m, m-j}} = \dfrac{u_{m-j}^2}{u_{m-j-1}\, u_{m-j+1}} = \dfrac{j^3(j+2)}{(j-1)(j+1)^3}$$
       As shown in the fixed left-edge case, $\dfrac{j^3(j+2)}{(j-1)(j+1)^3}>1$, which means for each fixed $j \geq 3$, $D_{m,m-j} >0$ for all sufficiently large $m$.

       \subsubsection{Growing right-edge indices: $j \to \infty$ and $j = o(m)$}
       Let $y_j = \dfrac{j}{m}$, and define the normalized ratio
    
$$q_j \coloneq \dfrac{m}{j+1}r_{m-j}.$$       
       Then in terms of $q_j$, the Boros-Moll recurrence \eqref{BM_recurrence} becomes
       $$q_j = \X_{y_j,h}(q_{j-1})\,,$$
       where $\X_{y,h}(q) = \dfrac{2-y}{(1-y)[2+h-y(1-y+h)q]}$.\vskip2mm
       The normalization $r_{m-j} = \dfrac{j+1}{m}q_j$ captures the dominant right-edge behavior. Indeed, with $\theta=1 - \dfrac{j}{m}$ , the stable limiting function satisfies $R_0(\theta) = \dfrac{1-\theta}{\theta} = \dfrac{j}{m-j} \sim j/m$ since $j = o(m)$. The normalization is therefore designed so that $q_j$ has a limiting value 1.   \vskip2mm
       Recall at the left endpoint, the derivative of the backward map along $R_0(\theta)$ approaches $1$, so the contraction degenerates and one needs and approximation of the form $R_0+hR_1$ in order to make the residual sufficiently small. Here, by contrast,
       $$\partial_q \X_{y,h}(q) = \dfrac{(2-y)y(1-y+h)}{(1-y)[2+h-y(1-y+h)q]^2} = O(y).$$for $q$ in a fixed neighborhood of 1, and therefore, $\partial_q \X_{y,h}(q) \to 0$ as $y,h \to 0$. \vskip2mm
       Thus the normalized dynamics becomes increasingly contracting near the right edge, so it is good enough to control the finite differences of $q_j$ without constructing an approximation. This makes the analysis for the growing right edge indices considerably simpler than the growing left edge case. \vskip5mm
       \paragraph*{Localization and contraction.}

       As in previous cases, we need a localization to trap the varying iterates $q_j$ in a uniform region where contraction, derivative bounds, iteration, and the logarithmic estimate all hold simultaneously.\vskip2mm
       Note that the end point value (at $j=0$) satisfies $q_0 = mr_m = \dfrac{2m}{2m+1} = 1 + O(h)$, while $\X_{y,h}(q) \to 1$ as $y,h \to 0$ for $q$ in a fixed neighborhood of 1. \vskip2mm \noindent
       Also, for fixed $C>0$
       \begin{equation}\label{uppercase_Chi}
           \X_{y,h}(q) = 1+ O_C(y+h)\end{equation} uniformly for $|q|\leq C$. Indeed, writing
       $$N \coloneq 2-y \,\,\text{and}\,\, D \coloneq (1-y)[2+h-y(1-y+h)q],$$
       we have
       $$D-N = h - y -hy -y(1-y)(1-y+h)q = O_C(y+h).$$ Moreover $$D = 2+ O_C(y+h)$$ so for sufficiently small $y+h$, $D\geq 1$. Hence $$|\X_{y,h}(q)-1 | = \left|\dfrac{D-N}{D}\right| \leq |D-N| = O_C(y+h).$$
       Let us now choose a compact interval $I = [a,b] \subseteq (0, \infty)$ such that  $1 \in (a,b)$. Since $q_0 \to 1$, $q_0 \in I$ for sufficiently large $m$. Also, from \eqref{uppercase_Chi}, after choosing $\delta, h>0$ sufficiently small
       $$\X_{y,h}(I) \subset I\,\,\,\,\,\,\text{for $0 \leq y \leq\delta$ and $0 < h \leq h_0$ .}$$
       Indeed, since $\X_{y,h}(q)$ is uniformly close to 1 for $q \in I$. \vskip2mm \noindent
       Then the recurrence $q_j = \X_{y_j,h}(q_{j-1})$ gives inductively
       $$q_j \in I\,,\,\,\,0 \leq j \leq  \delta m.$$
              This localization now allows us to place $q_j's$ in a single neighborhood where all subsequent estimates are uniform. In particular, on $[0, \delta] \times [0,h_0] \times I$, the denominator of $\X_{y,h}$ is bounded away from 0, and thus every partial derivative of $\X_{y,h}$ of order at most three used below is uniformly bounded. Then $\partial_q \X_{y,h} (q) = O(y)$ uniformly on this localized region. Therefore, we can choose $0<\rho<1$ such that
       \begin{equation} \label{partial_chi}
           |\partial_q \X_{y,h}(q)| \leq \rho 
            \end{equation}           
           for $0 \leq y \leq \delta\,,\,q\in I$ and for sufficiently small $h$.
\vskip3mm
\paragraph*{Finite-difference estimates.}
           
           Let us show that \begin{equation} \label{deltak_q}
           \Delta^k q_j = O(h^k)\,\,\text{for $k=1,2,3$}
       \end{equation}
       For $k=1$
\begin{align*}
    \Delta q_j &= q_{j+1} - q_j\\
    & = \X_{y_j+h,h}(q_j) -  \X_{y_j,h}(q_{j-1})\\
    & = (\X_{y_j+h,h}(q_j) - \X_{y_j+h,h}(q_{j-1})) + (\X_{y_j+h,h}(q_{j-1}) - \X_{y_j,h}(q_{j-1})).
\end{align*}
The first bracket can be controlled by the Mean Value Theorem, together with contraction \eqref{partial_chi}. More specifically
$$|\X_{y_j+h,h}(q_j) - \X_{y_j+h,h}(q_{j-1})| \leq \rho |\Delta q_{j-1}|.$$
The second bracket is simply $O(h)$ since $\partial_y \X_{y,h}$ is uniformly bounded on the localized region. Hence,
$$|\Delta q_j| \leq \rho |\Delta q_{j-1}| + Ch.$$
Iterating yields
\begin{equation} \label{delta_q}
    \Delta q_j = O(h).
\end{equation}
Let us note that, to conclude $\Delta q_j = O(h)$, we also need $\Delta q_0 = O(h)$, but this follows easily since $q_0 = 1+ O(h)$ and $q_1 = \X_{h,h}(q_o) = 1+ O(h)$.\vskip2mm
Finally, the higher order estimates follow from repeating the preceding mean value and contraction argument after taking one and two further finite differences, and using the smoothness of \(X_{y,h}\) in the recurrence $q_j=\X_{y_j,h}(q_{j-1}),$ together with the uniform contraction \eqref{delta_q}, the uniform derivative bounds established above, and the corresponding initial finite-difference estimates, we obtain
       \begin{equation*}
    \Delta^k q_j = O(h^k)\,\,\,\,\text{for $k=2,3$.}
\end{equation*}
Indeed, at each stage, the highest-order difference is multiplied by the same contraction factor, while the remaining terms contain only lower-order differences already controlled and factors of $\Delta y_j = h$.
\vskip3mm
\paragraph*{Third difference and positivity.}
To pass to $\log q_j$, note that since $q_j \in I$, the derivatives of $\log x$ through order three are uniformly bounded on the range of the $q_j\,'s$. Then it follows from the finite-difference Taylor expansion of $\log x$ on $I$
$$\Delta^3 \log q_j = O(\Delta^3 q_j)+ O((\Delta q_j)(\Delta^2 q_j)) + O((\Delta q_j)^3).$$
Using \eqref{deltak_q}, we conclude
\begin{equation}
    \Delta^3 \log q_j = O(h^3) = O(m^{-3}).
\end{equation}
We can now consider $D_{m,m-j}$. Since $r_{m-j} = \dfrac{j+1}{m}q_j$, after taking logarithm, we get
$$\log r_{m-j} = \log(j+1) - \log m + \log q_j.$$ Substituting into $D_{m,i}$ with $i=m-j$ gives
$$D_{m, m-j} = \log \left(\dfrac{j^3 (j+2)}{(j-1)(j+1)^3}\right) + \log \left(\dfrac{q_{j+1}^3 q_{j-1}}{q_j^3 q_{j+2}}\right).$$
We know $\log \left(\dfrac{j^3 (j+2)}{(j-1)(j+1)^3}\right) = \dfrac{2}{j^3} + O(j^{-4})$ (see appendix). Also $\log \left(\dfrac{q_{j+1}^3 q_{j-1}}{q_j^3 q_{j+2}}\right) = \Delta^3 \log q_{j-2}$. Therefore,
$$D_{m, m-j} = \dfrac{2}{j^3} + O(j^{-4}) + O(m^{-3}).$$
Since $j \to \infty$, we have $O(j^{-4}) = o(j^{-3})$. Moreover, $m^{-3} = j^{-3 \cdot}\dfrac{m^{-3}}{j^{-3}}$ and $\dfrac{m^{-3}}{j^{-3}} \to 0$ since $j = o(m)$. Thus, $O(m^{-3}) = o(j^{-3})$. Combining the two estimates simplify above equation to
\begin{equation*}
    D_{m, m-j} = \dfrac{2}{j^3} + o(j^{-3}).
\end{equation*}
which is positive for sufficiently large $m$ in the growing right-edge regime.
\subsection{Proof of eventual strict log-concavity}
To complete the proof, suppose on contrary that there exists a sequence $m_n \to \infty$ and indices $3 \leq i_n \leq m_n-3$ for which $D_{m_n, i_n}<0$. After passing to a subsequence, $i_n/m_n\to\theta\in[0,1]$. If $0<\theta<1$, the interior result applies. If $\theta=0$, the fixed, inner, and outer left-edge arguments cover all possibilities after further subsequences according to the behavior of $i_n$ and $i_n/\sqrt{m_n}$. If $\theta=1$, the right-edge result applies. More specifically, set $j_n = m_n - i_n$. After passing to a further subsequence, either $j_n$ remains bounded, in which case the fixed right-edge argument applies, or $j_n\to\infty$. In the latter case $j_n=o(m_n)$, so the growing right-edge argument applies. In either case, $D_{m_n,i_n}>0$ eventually, a contradiction.

\section{Final Remarks}
We conclude with two directions suggested by the present work.

\begin{itemize}

\item   The proof of Theorem \ref{eventual_LC} is stated asymptotically and does not provide an explicit value of $M$. The estimates in our analysis arise from explicit rational functions and quantitative contraction arguments, which might suggest that an effective version could be obtained by tracking the constants throughout the interior, left-edge and right-edge regimes. Such an effective threshold, combined with a verification of the remaining finite cases, would fully resolve Conjecture \ref{LC_BM_conj}.

    \item Beyond the current application to the log-concavity conjecture, we believe the ideas and techniques used in the proof of Theorem \ref{eventual_LC} could be utilized to develop a more general asymptotic framework for deriving higher-order shape inequalities from nonlinear ratio recurrences. More specifically, the combination of stable fixed points, contraction, finite-difference estimates, and separate regime analysis provides a discrete dynamical system viewpoint that may be useful in studying other recursively defined combinatorial sequences and arrays.

\end{itemize}
\vskip10mm
\section*{Statement on the Use of AI} The author acknowledges the use of ChatGPT (GPT-5.6 Sol) as a tool for exploring proof strategies for Theorem \ref{eventual_LC}, including the discrete dynamical-system formulation, localization arguments and for refining the exposition of several intermediate estimates. All mathematical statements and proofs in the current manuscript were checked and written by the author, who takes full responsibility for the content of the work.
\newpage
\appendix
\section{Appendix}

\begin{lemma}[Finite differences of smooth functions] \label{finite difference}

Let $F \in C^4(I)$, where $I \subseteq (0,\infty)$ is an interval, and suppose $F^{(4)}$ is uniformly bounded on a compact subset $E \subset I$. Then, for $\theta, \theta+h, \theta+2h, \theta + 3h \in E$,

$$\Delta_h^3F(\theta) = h^3F'''(\theta) + O_E(h^4)\,,$$
where $$\Delta^3_h F(\theta) = F(\theta+3h) - 3F(\theta+2h) + 3F(\theta+h) - F(\theta).$$
\end{lemma}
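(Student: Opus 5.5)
The plan is to Taylor expand $F$ about $\theta$ to third order with an integral (or Lagrange) remainder, and then use the fact that the third difference annihilates polynomials of degree at most two. I will assume, as the lemma intends, that the whole interval $[\theta,\theta+3h]$ lies in $E$; for instance, $E$ is a compact interval. This holds in every application in the paper, since there $E=I_\epsilon$ is a compact interval.

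For $k=0,1,2,3$, Taylor's theorem with Lagrange remainder gives
$$F(\theta+kh)=F(\theta)+khF'(\theta)+\frac{(kh)^2}{2}F''(\theta)+\frac{(kh)^3}{6}F'''(\theta)+\frac{(kh)^4}{24}F^{(4)}(\xi_k),$$
with $\xi_k\in[\theta,\theta+kh]\subseteq E$. Next I substitute these into $\Delta_h^3F(\theta)=F(\theta+3h)-3F(\theta+2h)+3F(\theta+h)-F(\theta)$ and collect by powers of $h$. The coefficient of $h^n F^{(n)}(\theta)/n!$ is $\sum_{k=0}^3(-1)^{3-k}\binom{3}{k}k^n$. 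This sum vanishes for $n=0,1,2$ and equals $3!=6$ for $n=3$, the standard identity for third differences of monomials. These coefficients can be checked directly: $-1+3-3+1=0$, $3-6+3=0$, $3-12+9=0$, and $3-24+27=6$. Hence the cubic term contributes exactly $h^3F'''(\theta)$.

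The remainder is $\frac{h^4}{24}\bigl(81F^{(4)}(\xi_3)-48F^{(4)}(\xi_2)+3F^{(4)}(\xi_1)\bigr)$. In absolute value it is at most $\frac{132}{24}\,h^4\sup_E|F^{(4)}|$, which is $O_E(h^4)$ with a constant depending only on the bound for $F^{(4)}$ on $E$.

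The only delicate point is making sure the intermediate points $\xi_k$ lie in the set where $F^{(4)}$ is bounded. If $E$ were an arbitrary compact set rather than an interval, I would replace it by its convex hull intersected with $I$. Alternatively, I would note that in all uses of the lemma the relevant set is an interval such as $I_\epsilon$.
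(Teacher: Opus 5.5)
Your proof is correct and follows the same route as the paper. Both Taylor-expand $F(\theta+kh)$ to third order, substitute into $\Delta_h^3F(\theta)$, observe that the coefficients of $F,F',F''$ cancel, and note that the cubic term contributes exactly $h^3F'''(\theta)$. Your explicit Lagrange remainder bound $\frac{132}{24}h^4\sup|F^{(4)}|$ makes precise what the paper leaves implicit in its $O_E(h^4)$. So does your remark that the intermediate points $\xi_k$ must lie where $F^{(4)}$ is bounded; the convex hull of $E$ works, since it lies in $I$ and $F^{(4)}$ is bounded there by continuity.
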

\begin{proof}
  By Taylor's formula
    $$F(\theta+kh) = F(\theta)+khF^\prime(\theta) + \dfrac{k^2h^2}{2}F''(\theta) + \dfrac{k^3h^3}{6}F'''(\theta) + O_E(h^4).$$
    Substitution into the third difference gives
   \begin{align*}
       \Delta^3_h F(\theta) = (1-3+3-1)F(\theta) + (3-6+3)hF'(\theta) +\left(\dfrac{9}{2}-6 +\dfrac{3}{2}\right)h^2F''(\theta)&\\+\left(\dfrac{9}{2}-4+\dfrac{1}{2}\right)h^3 F'''(\theta)+ O_E(h^4).
   \end{align*}
    The first three terms vanish, and we are left with
    $$\Delta^3_h F(\theta) = h^3 F'''(\theta)+ O(h^4).$$
\end{proof}
\vskip3mm \noindent
\begin{lemma}[Telescoping estimate] \label{telescoping}

Let $P_i \coloneq \displaystyle \prod_{k=i}^{m-1}\left(1+ \dfrac{1}{m+k^2}\right)$ and $U_i \coloneq \dfrac{2m(m-i+1)}{i(2m+1)}$. Then
\begin{equation}\label{telescope_CGZ}
    \dfrac{U_i}{P_i}<r_i< U_i.
    \end{equation}
\end{lemma}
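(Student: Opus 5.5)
The plan is to telescope the bounds \eqref{CGZ_bound} from the right endpoint $i=m$, where $r_m$ can be computed exactly. Throughout, take $m\ge 2$ and $1\le i\le m-1$, so that the product $P_i$ is nonempty and every index $k$ it involves lies in the range $1\le k\le m-1$ where \eqref{CGZ_bound} holds. (At $i=m$ the empty product gives equality, so the strict inequalities cannot hold there.)

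\emph{Step 1: compute $r_m$.} From the explicit formula, only $k=m$ contributes to $d_m(m)$, so
$$d_m(m)=2^{-2m}\,2^m\binom{2m}{m}.$$
For $d_{m-1}(m)$ there are two terms. The term $k=m-1$ equals $2^{-2m}\,2^{m}\binom{2m-1}{m-1}$. The term $k=m$ equals $2^{-2m}\,2^m\binom{2m}{m}\,m$. Using $\binom{2m-1}{m-1}=\tfrac12\binom{2m}{m}$ gives
$$d_{m-1}(m)=2^{-2m}\,2^m\binom{2m}{m}\Bigl(m+\tfrac12\Bigr).$$
Hence
$$r_m=\frac{2}{2m+1}=U_m.$$

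\emph{Step 2: telescope.} Since $u_k=r_{k+1}/r_k$, we have
$$r_i=\frac{r_m}{\prod_{k=i}^{m-1}u_k}.$$
The lower bound $f_k$ telescopes: $\prod_{k=i}^{m-1}\frac{m-k}{m-k+1}=\frac{1}{m-i+1}$ and $\prod_{k=i}^{m-1}\frac{k}{k+1}=\frac{i}{m}$. Therefore
$$\prod_{k=i}^{m-1}f_k=\frac{i}{m(m-i+1)}.$$
Since $g_k=f_k\bigl(1+\frac{1}{m+k^2}\bigr)$, it follows that
$$\prod_{k=i}^{m-1}g_k=\frac{i}{m(m-i+1)}\,P_i.$$

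\emph{Step 3: combine.} All quantities are positive, and the product is nonempty, so the strict bounds $f_k<u_k<g_k$ multiply to
$$\frac{i}{m(m-i+1)}<\prod_{k=i}^{m-1}u_k<\frac{i}{m(m-i+1)}\,P_i .$$
Dividing $r_m=\frac{2}{2m+1}$ by these quantities gives
$$\frac{U_i}{P_i}<r_i<U_i,$$
where $U_i=\frac{2m(m-i+1)}{i(2m+1)}$. This is \eqref{telescope_CGZ}.

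The only step needing care is the exact evaluation of $r_m$ in Step 1, specifically the bookkeeping of the two contributions to $d_{m-1}(m)$. The rest follows directly from \eqref{CGZ_bound}.
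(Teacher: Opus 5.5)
Your proof is correct and follows the paper's argument: telescope $f_k<u_k<g_k$ over $k=i,\dots,m-1$ using $g_k=f_k\bigl(1+\frac{1}{m+k^2}\bigr)$, then insert the exact value $r_m=\frac{2}{2m+1}$. The differences are cosmetic. The paper gets $r_m$ from the recurrence \eqref{BM_recurrence} at $i=m+1$ (where $d_{m+1}(m)=0$) rather than from the explicit sum. Your restriction to $1\le i\le m-1$, noting that equality holds at $i=m$, is a correct clarification the paper leaves implicit.
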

\begin{proof}
    The result follows directly from Chen-Gu/Zhao bounds \eqref{CGZ_bound}. Recall
    \begin{equation*}
        f_i(m) < u_i(m) < g_i(m)\,,\end{equation*}
       where $f_i(m) = \dfrac{(m-i)i}{(m - i+1)(i+1)}$  and $g_i(m) = \dfrac{(m-i)\,i\,(m+i^2 +1)}{(m-i+1) (i+1)(m+i^2)}.$\vskip2mm \noindent
       Notice that $g_i = f_i \left(1+ \dfrac{1}{m+i^2}\right)$. Since $u_i = \dfrac{r_{i+1}}{r_i}$, 
       $$f_i< \dfrac{r_{i+1}}{r_i} < f_i\left(1+ \dfrac{1}{m+i^2}\right).$$
       Telescoping (product) through $k=i,i+1,\dots,m-1$ gives
       \begin{equation}\label{simplified_CGZ}
           \dfrac{i}{m(m-i+1)} < \dfrac{r_m}{r_i} < \dfrac{i}{m(m-i+1)}  \prod_{k=i}^{m-1}\left(1+ \dfrac{1}{m+k^2}\right).
       \end{equation}
       It remains to find $r_m$. Applying the recurrence \eqref{BM_recurrence} with $i=m+1$ and simplifying gives
       $$r_m=\dfrac{d_m}{d_{m-1}} = \dfrac{2}{2m+1}.$$After substituting $r_m$ into \eqref{simplified_CGZ}
       \begin{equation*}
           \dfrac{2m(m-i+1)}{i(2m+1)} \cdot\dfrac{1}{\displaystyle\prod_{k=i}^{m-1}\left(1+ \dfrac{1}{m+k^2}\right)}< r_i <\dfrac{2m(m-i+1)}{i(2m+1)}. 
       \end{equation*}
       which proves the result.
       \end{proof}
       As immediate consequences, we obtain the following localization results needed for both interior and growing left regimes. 
       \begin{coro}[Localization] \label{Localization}
       \begin{enumerate}[label=\normalfont(\roman*)]\,\,\,
       \vskip2mm
       \item {\bf(Interior)} $r_i = R_0(\theta_i)+ O_\epsilon(h).$
 \item {\bf (Inner left-edge regime)} $s_i = 1+ O_C(m^{-1/2}).$ 
 \item {\bf (Outer left-edge regime)} $\dfrac{r_i- R_0(\theta_i)}{R_0(\theta_i)} = O(i^{-1}) = o(\theta_i).$ 
\end{enumerate}
\end{coro}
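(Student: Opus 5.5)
The plan is to derive all three statements of Corollary \ref{Localization} directly from the two-sided bound \eqref{telescope_CGZ} of Lemma \ref{telescoping}, $U_i/P_i<r_i<U_i$. The only two inputs needed are an exact comparison of $U_i$ with the limiting functions, and a uniform bound on $\log P_i$. Each regime then follows by specializing these two estimates.

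\emph{Step 1: comparison of $U_i$ with $R_0$.} Since $R_0(\theta_i)=(m-i)/i$, we have the exact identity
$$\frac{U_i}{R_0(\theta_i)}=\frac{2m}{2m+1}\cdot\frac{m-i+1}{m-i}=\Bigl(1-\frac{1}{2m+1}\Bigr)\Bigl(1+\frac{1}{m-i}\Bigr)=1+O\Bigl(\frac1m+\frac{1}{m-i}\Bigr).$$
For the normalized ratio $s_i=(i/m)r_i$ I will use instead
$$\frac im\,U_i=\frac{2(m-i+1)}{2m+1}=1+O\Bigl(\frac im+\frac1m\Bigr).$$

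\emph{Step 2: bound on $P_i$.} Using $1+x\le e^x$ and comparing the sum with an integral (the summand is decreasing in $k$), we get
$$0\le\log P_i\le\sum_{k\ge i}\frac{1}{m+k^2}\le\frac{1}{m+i^2}+\int_i^\infty\frac{dx}{m+x^2}\le\frac{1}{m+i^2}+\min\Bigl(\frac1i,\frac{\pi}{2\sqrt m}\Bigr).$$
Hence $P_i^{-1}=1+O(\min(i^{-1},m^{-1/2}))$ uniformly in $1\le i\le m$. This uniform tail estimate is the main point to get right. I expect it to be the only real obstacle, because the rest is bookkeeping, but the bound must be uniform in $i$ so that the $m^{-1/2}$ branch covers small $i$ and the $1/i$ branch covers large $i$. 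Combining with the sandwich, $r_i=U_i\bigl(1+O(\min(i^{-1},m^{-1/2}))\bigr)$.

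\emph{Step 3: the three regimes.}
\begin{enumerate}[label=\normalfont(\roman*)]
\item For (i), take $\epsilon m\le i\le(1-\epsilon)m$. Then $m-i\ge\epsilon m$ and $i\ge\epsilon m$, so $r_i=R_0(\theta_i)(1+O_\epsilon(h))$. Since $R_0\le 1/\epsilon$ on $[\epsilon,1-\epsilon]$, this gives $r_i=R_0(\theta_i)+O_\epsilon(h)$. The same holds on the enlarged interval $I_\epsilon$ after replacing $\epsilon$ by $\epsilon/2$.
\item For (ii), take $i\le C\sqrt m$. Step 1 gives $(i/m)U_i=1+O(C m^{-1/2})$, and Step 2 gives $P_i^{-1}=1+O(m^{-1/2})$. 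Hence $s_i=1+O_C(m^{-1/2})$.
\item For (iii), take $i/\sqrt m\to\infty$ and $i/m\to0$. Then $m-i\sim m$, so $(r_i-R_0)/R_0=O(1/m+1/(m-i)+1/i)=O(i^{-1})$. Finally $i^{-1}/\theta_i=m/i^2\to0$, so $O(i^{-1})=o(\theta_i)$.
\end{enumerate}
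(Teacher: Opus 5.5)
Your proposal is correct and follows essentially the paper's argument: all three parts come from the sandwich $U_i/P_i<r_i<U_i$ of Lemma~\ref{telescoping}, the tail bound $\log P_i\le\sum_{k\ge i}(m+k^2)^{-1}$, and an explicit comparison of $U_i$ (or $(i/m)U_i$) with $R_0(\theta_i)$ (or with $1$). The differences are cosmetic. You compare $U_i/R_0(\theta_i)$ multiplicatively and use one uniform tail bound $O(\min(i^{-1},m^{-1/2}))$, which makes (iii) a one-line consequence; the paper instead bounds $U_i-R_0(\theta_i)$ additively, handles the tail separately in each regime, and reaches (iii) via $s_i=1-\theta_i+O(i^{-1})$.
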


Let us note that these estimates are uniform over their corresponding regimes, with the implied constants depending only on the fixed regime parameters. In the outer-left case, the $o(\theta_i)$-term is uniform along ranges for which $i/\sqrt m\to\infty$ (equivalently, $h/\theta_i^2\to0$).

\begin{proof}\,\,\,
\underline{ \it Proof of (i).}\vskip3mm \noindent
              Recall $i/m \in I_\epsilon = [\epsilon/2 , 1- \epsilon/2]$ in the interior regime. First
           $$\log P_i = \sum_{k=i}^{m-1}\log\left(1+ \dfrac{1}{m+k^2}\right) \leq \sum_{k=i}^{m-1} \dfrac{1}{m+k^2}  \leq \sum_{k=i}^\infty \dfrac{1}{k^2} = O_\epsilon (i^{-1}) = O_\epsilon (i^{-1}) = O_\epsilon (h).$$
           Therefore $P_i = 1+ O_\epsilon(h)$, which implies $\dfrac{1}{P_i} =1+ O_\epsilon(h) $.\vskip2mm \noindent
           Now we compare $U_i$ with $R_0(\theta_i) = \dfrac{m-i}{i}$. Indeed
           $$U_i - R_0(\theta_i) =\dfrac{2m(m-i+1)}{i(2m+1)} - \dfrac{m-i}{i}  = \dfrac{m+i}{i(2m+1)}.$$
Since $i \geq \epsilon m/2$, $\dfrac{m+i}{i(2m+1)} = O_\epsilon(m^{-1}) = O_\epsilon(h)$. Hence $$U_i = R_o(\theta_i)+O_\epsilon(h).$$
On the other hand, since $U_i = O_\epsilon(1)$ on $I_\epsilon$, we have
$$\dfrac{U_i}{P_i} =  U_i(1+ O_\epsilon(h)) = U_i+ O_\epsilon(h) = R_0(\theta_i)+O_\epsilon(h).$$ Thus 
$$R_0(\theta_i)+O_\epsilon(h)<r_i< R_0(\theta_i)+O_\epsilon(h).$$ Consequently, $r_i = R_0(\theta_i)+ O_\epsilon(h)$.

\vskip10mm \noindent
\underline{ \it Proof of (ii).}\vskip3mm \noindent
           Recall in the inner left-regime, $i=O(\sqrt{m})$.\vskip2mm \noindent
        Multiplying \eqref{telescope_CGZ} by $\dfrac{i}{m}$ gives
        \begin{equation} \label{CGZ_si}
           \dfrac{2(m-i+1)}{2m+1} \cdot\dfrac{1}{\displaystyle\prod_{k=i}^{m-1}\left(1+ \dfrac{1}{m+k^2}\right)}< s_i <\dfrac{2(m-i+1)}{2m+1}. 
       \end{equation}       
       Let $C>0$ be fixed. Then for $i \leq C\sqrt{m}$,
       $$\dfrac{2(m-i+1)}{2m+1} = 1 - \dfrac{2i-1}{2m+1} = 1+ O_C(m^{-1/2}).$$
Also, $$\log P_i = \sum_{k=i}^{m-1}\log\left(1+ \dfrac{1}{m+k^2}\right) \leq \sum_{k=0}^{\infty}\left(\dfrac{1}{m+k^2}\right) \leq \dfrac{1}{m} + \int_0^\infty \dfrac{1}{m +x^2}\,dx = \dfrac{1}{m}+ \dfrac{\pi}{2\sqrt{m}}.$$
The last expression is $O_C(m^{-1/2})$, and hence
       $$P_i = e^{\log P_i} = e^{O(m^{-1/2})} = 1+ O(m^{-1/2}).$$
       Thus $\dfrac{1}{P_i} = 1+ O(m^{-1/2})$. \vskip2mm \noindent
       So both endpoints in \eqref{CGZ_si} are the same; therefore, $s_i = 1+ O_C(m^{-1/2})$ uniformly for $i \leq C\sqrt{m}$.

       \vskip10mm \noindent
        \underline{ \it Proof of (iii).}\vskip3mm \noindent
Recall in the outer left-edge regime, $i>>\sqrt{m},  \dfrac{i}{\sqrt{m}} \to \infty$ and $\dfrac{i}{m} \to 0$. \vskip2mm
\noindent
Since $i>>\sqrt{m}$
$$\log P_i \leq \sum_{k=i}^{m-1} \dfrac{1}{m+k^2} \leq \sum_{k=i}^\infty \dfrac{1}{k^2} = O(i^{-1}).$$
Thus $P_i = 1+ O(i^{-1})$.\vskip2mm \noindent
Similarly, $$ \dfrac{2(m-i+1)}{2m+1}  = 1-\dfrac{i}{m} + O(m^{-1}) = 1-\theta_i + O(h).$$
Since $h/i^{-1} = i/m \to 0 \implies h = o(i^{-1})$, this simplifies to $$ \dfrac{2(m-i+1)}{2m+1}  = 1-\dfrac{i}{m} + O(m^{-1}) = 1-\theta_i + O(i^{-1}).$$ As a result, we have
\begin{equation}
s_i = 1-\theta_i + O(i^{-1}).
\end{equation}\label{localization_outer_left}
       Now $r_i = (m/i)s_i = s_i/\theta_i$. So, (54) implies
       $$r_i = \dfrac{1- \theta_i}{\theta_i} + O(\frac{1}{i\theta_i}) = R_0(\theta_i)+ O(\frac{1}{i\theta_i}).$$
       Consequently, $$\frac{r_i}{R_0(\theta_i)} = 1+ O(i^{-1}).$$ 
       Equivalently
       $$\frac{r_i- R_0(\theta_i)}{R_0(\theta_i)} = O(i^{-1}) = o(\theta_i)\,\,\,\,\,(\text{because $i^{-1}/\theta = m/i^2 \to 0$}).$$ 
       
      \end{proof}

\begin{lemma}[Logarithmic expansion] \label{logarithmic expansion}
    $$\log\left(\dfrac{i^3 (i+2)}{(i-1)(i+1)^3}\right) = \dfrac{2}{i^3}+ O(i^{-4}).$$
\end{lemma}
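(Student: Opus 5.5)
The plan is to rewrite the argument of the logarithm as $1$ plus a small rational function of $i$, and then expand $\log(1+x)$. Expanding the numerator and denominator gives
\[
i^3(i+2) = i^4 + 2i^3, \qquad (i-1)(i+1)^3 = i^4 + 2i^3 - 2i - 1 .
\]
The second identity comes from $(i+1)^3 = i^3+3i^2+3i+1$ multiplied by $(i-1)$. The numerator therefore exceeds the denominator by exactly $2i+1$, consistent with the identity used in the fixed left-edge case. Hence
\[
\frac{i^3(i+2)}{(i-1)(i+1)^3} = 1 + x_i, \qquad x_i := \frac{2i+1}{(i-1)(i+1)^3}.
\]

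Next I estimate $x_i$ for large $i$. Since $(i-1)(i+1)^3 = i^4\bigl(1+O(i^{-1})\bigr)$ and $2i+1 = 2i\bigl(1+O(i^{-1})\bigr)$, I get
\[
x_i = \frac{2}{i^3}\bigl(1+O(i^{-1})\bigr) = \frac{2}{i^3} + O(i^{-4}).
\]
To make this quantitative, I would compute
\[
x_i - \frac{2}{i^3} = \frac{i^3(2i+1) - 2(i-1)(i+1)^3}{i^3(i-1)(i+1)^3}.
\]
Here the $i^4$ terms cancel, so the numerator is a cubic polynomial in $i$ and the denominator has degree $7$. This gives $O(i^{-4})$ uniformly for $i\ge 2$.

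Finally, because $0 < x_i \le 1$ for $i \ge 2$ (indeed $x_i \to 0$), the inequality $|\log(1+x) - x| \le x^2$ valid for $0\le x\le 1$ gives
\[
\log(1+x_i) = x_i + O(x_i^2) = \frac{2}{i^3} + O(i^{-4}) + O(i^{-6}) = \frac{2}{i^3} + O(i^{-4}).
\]
This is the claim.

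There is no serious obstacle. The only place needing care is the algebraic identity for the denominator, together with the cancellation of the $i^4$ terms in the error numerator, which is what makes the remainder genuinely $O(i^{-4})$ rather than $O(i^{-3})$. The same lemma then applies verbatim with $i$ replaced by $j$ in the right-edge regime.
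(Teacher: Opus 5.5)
Your proof is correct, but it takes a different route from the paper. The paper splits the logarithm as $\log(1+2/i)-\log(1-1/i)-3\log(1+1/i)$ and expands each term to fourth order. It then checks that the $i^{-1}$ and $i^{-2}$ coefficients cancel, arriving at $2/i^3-3/i^4+O(i^{-5})$.

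You instead put all the cancellation into the exact polynomial identity $i^3(i+2)-(i-1)(i+1)^3=2i+1$, the same identity the paper uses in the fixed left-edge case. After that, only the first-order bound $|\log(1+x)-x|\le x^2$ is needed. Your error numerator $i^3(2i+1)-2(i-1)(i+1)^3=-3i^3+4i+2$ is indeed cubic, and it even recovers the paper's second coefficient $-3$.

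Your route gives an explicit error bound that holds uniformly for every $i\ge 2$ with computable constants. That matters for the paper's closing remark about making $M$ effective. The paper's series argument is purely asymptotic and needs $2/i<1$ for the expansion of $\log(1+2/i)$ to make sense. In exchange, the paper's route displays the $-3/i^4$ term directly without any extra polynomial algebra.
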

\begin{proof}
    After factoring out $i^4$, 
    $$\dfrac{i^3 (i+2)}{(i-1)(i+1)^3} = \dfrac{1 + \frac{2}{i}}{(1 - \frac{1}{i})(1+ \frac{1}{i})^3}.$$
    Taking the logarithm on both sides:
    $$\log\left(\dfrac{i^3 (i+2)}{(i-1)(i+1)^3}\right)  = \log\left(1+ \frac{2}{i}\right) - \log\left(1- \frac{1}{i}\right) - 3\log\left(1+ \frac{1}{i}\right).$$
    Using $\log(1+x) = x-\dfrac{x^2}{2}+ \dfrac{x^3}{3}-\dfrac{x^4}{4}+ O(x^5)$, we can expand out each logarithmic term on the RHS
    \begin{align*}
        \log\left(1+ \frac{2}{i}\right) & = \dfrac{2}{i} - \dfrac{2}{i^2}+ \dfrac{8}{3i^3}-  \dfrac{4}{i^4} + O(i^{-5}).\\
        -\log\left(1- \frac{1}{i}\right) & = \dfrac{1}{i} + \dfrac{1}{2i^2}+ \dfrac{1}{3i^3} +  \dfrac{1}{4i^4} + O(i^{-5}).\\
        -3\log\left(1+ \frac{1}{i}\right) & = -\dfrac{3}{i} + \dfrac{3}{2i^2}- \dfrac{1}{i^3} +  \dfrac{3}{4i^4} + O(i^{-5}).
    \end{align*}
    Adding the three terms result in cancellations of $i^{-1}$ and $i^{-2}$ terms. Therefore,
    $$\log\left(\dfrac{i^3 (i+2)}{(i-1)(i+1)^3}\right) = \dfrac{2}{i^3} - \dfrac{3}{i^4} + O(i^{-5}).$$
    This proves the result since $O(i^{-4}) + O(i^{-5}) = O(i^{-4})$ as $i \to \infty$.
\end{proof}

\newpage
\begin{lemma} [Third difference estimates in the outer left region]\label{third difference} For $\dfrac{i}{\sqrt{m}} \to \infty$ and $i=o(m)$, 

\begin{align*}
    -\Delta^3 \log R_0(\theta_{i-1}) &= \dfrac{2}{i^3} + o(i^{-3}).\\
    -\Delta^3 \log \left(1 -\dfrac{1}{2(i-1)}+ \dfrac{1}{m-(i-1)}\right) &= o(i^{-3}).\\
   - \Delta^3 \log(1+ u_{i-1})&= o(i^{-3}).
    \end{align*}
\end{lemma}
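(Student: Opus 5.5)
We prove the three estimates separately. Throughout, $h=1/m$, $\theta_i=i/m$, and we work in the regime $m/i^2\to0$, $i/m\to0$. The basic tool is that $\Delta^3$ of a smooth function sampled at step $1$ in the index equals an integral of the third derivative over a window of length $3$, as in Lemma \ref{finite difference} and the FTC argument used for $\Delta^k\tau_i$. Concretely, if $F$ is $C^4$ on $[i-1,i+2]$, then $\Delta^3F(i-1)=F'''(i-1)+O(\sup|F^{(4)}|)$.

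\textbf{First estimate.} Write $\log R_0(\theta_{i-1})=\log(m-(i-1))-\log(i-1)$ and treat the two pieces as functions of the index. For $-\log(x)$ we have $-\frac{d^3}{dx^3}\log x=-2/x^3$, and $\Delta^3$ applied to $-\log$ produces $+2/i^3$ with a fourth-derivative error $O(i^{-4})$. This is exactly the computation of Lemma \ref{logarithmic expansion}, since $-\Delta^3\log(i-1)=\log\frac{i^3(i+2)}{(i-1)(i+1)^3}$. For $\log(m-x)$ the third derivative is $O((m-i)^{-3})=O(m^{-3})$, and $m^{-3}=o(i^{-3})$ because $i=o(m)$.

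\textbf{Second estimate.} Set $g(x)=\log\bigl(1-\frac{1}{2x}+\frac{1}{m-x}\bigr)$. For $x\asymp i$ the argument is $1+O(i^{-1})$. Each derivative of $\frac{1}{2x}$ gains a factor $x^{-1}$, so $\frac{d^3}{dx^3}\frac{1}{2x}=O(i^{-4})$, and derivatives of $\frac{1}{m-x}$ are $O(m^{-2})$ or smaller. Applying the chain rule (Faà di Bruno) to $\log(1+w)$ with $w=O(i^{-1})$, $w'=O(i^{-2})$, $w''=O(i^{-3})$, $w'''=O(i^{-4})+O(m^{-4})$ gives $g'''=O(i^{-4})$. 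Every product term is at least this small, for example $(w')^3=O(i^{-6})$. Hence $\Delta^3 g=O(i^{-4})=o(i^{-3})$.

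\textbf{Third estimate.} This is the main obstacle. We have $u_i=E_i/\hat r_i$, where $\hat r_i=R_0(\theta_i)(1+O(i^{-1}))\asymp\theta_i^{-1}$. The outer-left error bounds give $\Delta^kE_i=O(h^{k+2}\theta_i^{-k-4})$. Since $h/\theta_i=1/i$ and $h/\theta_i^2=m/i^2$, this reads
\[
\Delta^kE_i=O\bigl(i^{-k}\,(m/i^2)^2\,\theta_i^{0}\bigr)=O\bigl(i^{-k}(m/i^2)^2\bigr).
\]

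Next we need finite differences of $1/\hat r_i$. Here $1/\hat r_i=\theta_i(1-\theta_i)^{-1}(1+O(i^{-1}))^{-1}$ is a smooth function of the index whose $k$-th derivative is $O(\theta_i\,i^{-k})$. This follows from the same derivative-counting argument as in the second estimate. The discrete Leibniz rule then gives
\[
\Delta^k u_i=O\bigl(\theta_i\,i^{-k}(m/i^2)^2\bigr),\qquad k=0,\dots,3.
\]
In particular $u_i=o(1)$.

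Finally we pass to $\log(1+u_i)$ using the discrete chain rule, exactly as in the interior $\eta_i$ computation:
\[
\Delta^3\log(1+u_i)=O\bigl(|\Delta^3u|+|\Delta u||\Delta^2u|+|\Delta u|^3\bigr).
\]
The dominant term is $O(\theta_i(m/i^2)^2i^{-3})$, which is $o(i^{-3})$ because both $\theta_i\to0$ and $m/i^2\to0$. The point to check carefully is that the $\theta^{-k-4}$ loss in $\Delta^kE_i$ is fully compensated by the factor $\theta_i$ from $1/\hat r_i$ together with $h^{k+2}$. Rewriting everything in terms of $i^{-1}$ and $m/i^2$, as above, shows the powers match, leaving a margin of $\theta_i(m/i^2)^2$.
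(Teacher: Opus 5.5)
Your proposal is correct and follows the paper's proof essentially step for step: the same splitting of $\log R_0$ for the first estimate, and for the third the same route $u_i=E_i/\hat r_i$, $1/\hat r_i\asymp\theta_i$, discrete Leibniz and chain rules, with margin $\theta_i(m/i^2)^2=m/i^3\to0$. The only variation is in the second estimate, where you apply the chain rule to $\log(1+w)$ directly instead of factoring as $(1-\frac{1}{2x})(1+b(x))$; this is a harmless simplification.

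There is one sign slip to fix in the first estimate. It is $+\Delta^3\log(i-1)$, not $-\Delta^3\log(i-1)$, that equals $\log\frac{i^3(i+2)}{(i-1)(i+1)^3}$, and $\Delta^3$ applied to $-\log$ gives $-2/i^3$. The $+2/i^3$ comes from the outer minus sign in $-\Delta^3\log R_0(\theta_{i-1})$, so your conclusion is unaffected.
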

\begin{proof}
For the first term, recall $R_0(\theta_i) = \dfrac{m-i}{i}$.  By taking logarithm on both sides
$$\log R_0(\theta_i)  = \log(m-i) - \log i$$
which, after taking the third discrete difference, equals
$$-\Delta^3 \log R_0(\theta_{i-1}) =  \Delta^3 \log (i-1) -\Delta^3 \log (m-i+1).$$
By definition
$$\Delta^3 \log (i-1) = \log(i+2) - 3 \log(i+1) +3 \log i - \log (i-1).$$
which is the same as $\log \left(\dfrac{i^3(i+2)}{(i-1)(i+1)^3}\right)$. Then by \eqref{logarithmic expansion}, we have
$$\Delta^3 \log (i-1) = \dfrac{2}{i^3}+ O(i^{-4}).$$
Similarly, $$-\Delta^3 \log (m-i+1) = \dfrac{2}{(m-i)^3}+ O((m-i)^{-4}).$$
Combining the two estimates give
$$-\Delta^3 \log R_0(\theta_{i-1}) = \dfrac{2}{i^3} +\dfrac{2}{(m-i)^3}+ O(i^{-4}) + O((m-i)^{-4}).$$
Since $i= o(m)$,  $\left(\dfrac{(m-i)^{-3}}{i^{-3}}\right) = \left(\dfrac{i}{m-i}\right)^3 \to 0$. Therefore,
$$-\Delta^3 \log R_0(\theta_{i-1}) =\dfrac{2}{i^3} + o(i^{-3}).$$
For the second term, start with the factorization
$$1 - \dfrac{1}{2x}+ \dfrac{1}{m-x} = \left(1 - \dfrac{1}{2x}\right) \left(1+ \dfrac{2x}{(2x-1)(m-x)}\right).$$
Let $F = \log\left(1 - \dfrac{1}{2x}\right)$ and $G= \log\left(1+ \dfrac{2x}{(2x-1)(m-x)}\right)$. \vskip2mm \noindent
Direct computation gives
$$F'''(x) = O(x^{-4}).$$ and then by Taylor's third difference formula, we have
$$\Delta^3 F(i-1) = O(i^{-4}) = o(i^{-3})\,\,\,\,\,\,\,\,\,\text{(note that $F^{(4)}(x) = O(x^{-5})$)}.$$
For the second, put $b(i) \coloneq \dfrac{2i}{(2i-1)(m-i)}$ and notice $\dfrac{2i}{(2i-1)(m-i)} = \dfrac{1}{m-i} \left(1 + \dfrac{1}{2i-1}\right)$. Then, since $i = o(m)$, $b(i) = O(m^{-1}) = o(1)$. Moreover, regarding $b(x)$ as a smooth function near $x = i+ O(1)$, differentiating three times gives terms of the form
$$O((m-i)^{-4}), O(i^{-1}(m-i)^{-3}), O(i^{-2}(m-i)^{-2})\,\,\,\text{and}\,\,\, O(i^{-3}(m-i)^{-1})$$
But because $i = o(m)$, each one of these terms is $o(i^{-3})$. And, $G^{(4)}(x) = o(i^{-3})$. Moreover since $b(i) = o(1)$, the same holds for the third derivative of $\log(1+ b(x))$. Hence by Taylor's formula
$$\Delta^3 G(i-1) = o(i^{-3}).$$
Combining the two estimates give
$$-\Delta^3 \log \left(1 - \dfrac{1}{2(i-1)}+ \dfrac{1}{m-(i-1)} \right)= o(i^{-3}).$$
Finally, for the third term, after substituting for $\hat{r}_i$ 
$$u_i = \dfrac{E_i}{\hat{r}_i} = \dfrac{E_i}{R_o(\theta_i)\left(1 - \dfrac{1}{2i}+ \dfrac{1}{m-i} \right)}.$$
Since $R_0(\theta_i) = \dfrac{1-\theta_i}{\theta_i}$
$$\dfrac{1}{\hat{r}_i} = \dfrac{\theta_i}{1-\theta_i}\left(1 - \dfrac{1}{2i}+ \dfrac{1}{m-i} \right)^{-1}.$$
In the outer left regime, the second term is  $1 + O(i^{-1})$, thus
$$\dfrac{1}{\hat{r}_i} =  \theta_1 (1+ O(\theta_i)+ O(i^{-1})).$$
This implies
$$u_i \sim \theta_i E_i.$$
Now, because the estimate is $\Delta^k E_i = O(h^{k+2}\theta_i^{-k-4})$ for $k=0,1,2,3$, multiplying by $\theta_i$ gives
$$\Delta^k u_i = O(h^{k+2}\theta_i^{-k-3})\,,\,\,\,k=0,1,2,3.$$
Indeed for $k=1,2,3$, this follows from the discrete product rule applied to the explicit expression for $1/\hat{r}_i$ ; the difference of the $O(i^{-1})+O(\theta_i)$ correction are of lower order in the outer-left regime.
\vskip2mm \noindent
To pass to $\log(1+u_i)$, since $u_i = o(1)$, the derivatives of $x\mapsto\log(1+x)$ through order three are uniformly bounded on the relevant range. Repeated finite-difference identities (or the discrete chain/product rule) therefore yield
$$\Delta^3 \log(1+u_i) = O(\Delta^3 u_i) + O(\Delta u_i \Delta^2u_i)+ O((\Delta u_i)^3).$$
The leading term $O(\Delta^3 u_i) = O(h^5 \theta_i^{-6}) = o(i^{-3})$ since $h^5 \theta_i^{-6}/i^{-3} = m/i^3 \to 0$ in the outer left regime, i.e. $i/\sqrt{m} \to \infty$ and $i = o(m)$. Therefore,
$$\Delta^3 \log(1+u_i) = o(i^{-3}).$$

\end{proof}

\vskip2cm

%\pagebreak

\noindent Heshan Aravinda \\
Department of Mathematics and Statistics \\
University of Wyoming \\
1000 E. University Ave\\
Laramie, WY 82071\\
Email: hpathira@uwyo.edu

\end{document}